\documentclass[pdflatex,sn-mathphys-num]{sn-jnl}

\usepackage{amsmath,amssymb,amsthm}
\usepackage{geometry}
\newtheorem{theorem}{Theorem}
\newtheorem{lemma}{Lemma}
\newtheorem{proposition}{Proposition}
\newtheorem{corollary}{Corollary}
\newtheorem{remark}{Remark}

\usepackage{booktabs}
\usepackage{graphicx} 
\usepackage{float}      
\usepackage{placeins}   
\usepackage{xcolor}
\usepackage{url}
\usepackage{subcaption}
\usepackage{array}
\usepackage{tabularx}
\usepackage{comment}
\usepackage{algorithm}
\usepackage{algpseudocode}
\newcommand{\R}{\mathbb{R}}

\usepackage{caption}
\usepackage[english]{babel}

\begin{document}

\title[Scalable $s$--step PCG]{Scalable $s$--step Preconditioned Conjugate Gradient with Chebyshev Basis and Gauss--Seidel Gram Solve}


\author*[1]{\fnm{Pasqua} \sur{D'Ambra}}\email{pasqua.dambra@cnr.it}

\author[2]{\fnm{Massimo} \sur{Bernaschi}}\email{massimo.bernaschi@cnr.it}

\author[2]{\fnm{Mauro Giovanni} \sur{Carrozzo}}\email{maurogiovanni.carrozzo@cnr.it}

\author[3]{\fnm{Stephen} \sur{Thomas}}\email{sjt223@lehigh.edu}

\affil*[1]{\orgdiv{Institute for Applied Computing ``Mauro Picone''}, \orgname{National Research Council of Italy}, \orgaddress{\street{Via Pietro Castellino, 111}, \city{Naples}, \postcode{80131}, \country{Italy}}}

\affil[2]{\orgdiv{Institute for Applied Computing ``Mauro Picone''}, \orgname{National Research Council of Italy}, \orgaddress{\street{Via dei Taurini, 19}, \city{Rome}, \postcode{00185}, \country{Italy}}}

\affil[3]{\orgdiv{Department of Computer Science and Engineering}, \orgname{Lehigh University}, \orgaddress{\street{27 Memorial Drive West}, \city{Bethlehem}, \postcode{18015}, \state{PA}, \country{USA}}}


\abstract{We present a variant of the $s$--step Preconditioned Conjugate
Gradient (PCG) method that combines a Chebyshev--stabilized Krylov
basis with a Forward Gauss--Seidel (FGS) iteration for the solution of
the reduced Gram systems. In $s$--step PCG, multiple search directions
are generated per outer iteration, reducing global synchronization
costs but requiring the solution of small dense Gram systems whose
conditioning is critical for stability.

For a raw Chebyshev Krylov block, we derive an exact
moment--based representation of the associated Gram matrix in the
unpreconditioned setting and discuss its extension to left
preconditioning. This representation provides a structural
interpretation of the favorable Gram--matrix behavior observed for
moderate step sizes. Building on inexact Krylov theory and on the
algebraic connection between FGS and Modified Gram--Schmidt (MGS), we
derive residual bounds for the reduced solves and provide a rationale
for using a fixed, moderate number of FGS sweeps. Numerical
experiments assess the resulting accuracy of the Gram solves and the
convergence behavior of the outer iteration.

Large--scale experiments on modern NVIDIA GPU architectures,
including weak-scaling tests with aggregation-based
Algebraic MultiGrid (AMG) preconditioning, show that, within the
tested configurations, the proposed Chebyshev--stabilized $s$--step
PCG method with FGS Gram solves achieves convergence comparable to
classical PCG while reducing global synchronization overhead. The
results demonstrate its potential as a scalable
communication--reducing solver for current and next--generation accelerator systems.}

\keywords{Communication-avoiding Krylov methods, Chebyshev-stabilized $s$--step Conjugate Gradient, Iterated Gram solves, GPU-accelerated computing, Scalability}

\pacs[MSC Classification]{65F08, 65Y05}

\maketitle

\section{Introduction}
\label{sec:intro}

We consider the numerical solution of large, sparse, symmetric
positive--definite (SPD) linear systems
\[
    Ax = b,
    \qquad A \in \mathbb{R}^{n\times n},
    \quad x, b \in \mathbb{R}^n.
\]
Such systems arise ubiquitously in scientific and engineering applications
including discretized partial differential equations, optimization, and
data analysis.
For large--scale problems, direct solvers are often impractical due to
their memory footprint and superlinear complexity, making iterative
methods essential.
Among them, the Preconditioned Conjugate Gradient (PCG) method is the
standard algorithm for SPD systems.
Let $M$ be an SPD preconditioner and define the (left) preconditioned operator
\[
\mathcal{A} := M^{-1}A.
\]
Given an initial guess $x_0$ and residual $r_0 = b - Ax_0$, PCG constructs approximations in the affine spaces
\[
x_0+\mathcal{K}_m(\mathcal{A},M^{-1}r_0),
\]
where
\[
\mathcal{K}_m(\mathcal{A},M^{-1}r_0)
=
\operatorname{span}
\left\{
M^{-1}r_0,\,
\mathcal{A}M^{-1}r_0,\,
\dots,\,
\mathcal{A}^{m-1}M^{-1}r_0
\right\}.
\]
The convergence of PCG depends directly on the spectrum of $\mathcal{A}$ and thus on the quality of the preconditioner.
The choice of the preconditioner is therefore a critical design aspect, as it strongly affects both the convergence rate and the overall computational cost of the method.
In this work, however, we do not address the design of the preconditioner itself; instead, we focus on the intrinsic limitations of the PCG solver, which persist even in the presence of highly optimized preconditioners and stem from its algorithmic structure.

In particular, PCG relies on two additional performance--critical computational kernels: the sparse matrix--vector product (SpMV) and the computation of inner product (dot).
SpMV is inherently memory--bandwidth bound and requires communication to exchange boundary data among neighboring processes, with a communication cost that depends on the sparsity pattern of the matrix and the underlying data partitioning.
Importantly, this communication is typically local and structured, and several SpMV implementations have been shown to effectively overlap communication with computation, thereby mitigating its impact on scalability.
In contrast, the computation of dot operations entails global reduction operations, which impose global synchronization and introduce communication
latencies that are difficult to hide completely, thereby often
emerging as a primary scalability bottleneck on massively parallel
systems.
These considerations motivate the development of Communication--Avoiding (CA) Krylov subspace methods, and in particular $s$--step formulations, which trade additional local computation for a significant reduction in the number of global synchronizations, thereby improving strong scalability on modern HPC architectures.

In $s$--step PCG~\cite{ChronGear1989p,ChronGear1989}, the information from
$s$ consecutive Krylov iterations is aggregated into a single block.
For example, setting $u_0=M^{-1}r_0$, a raw monomial Krylov block has
the form
\[
U=
\left[
u_0,\,
\mathcal{A}u_0,\,
\dots,\,
\mathcal{A}^{s-1}u_0
\right],
\]
and the corresponding block of $A$--conjugate directions is obtained from
a reduced Gram system involving $U$ and the previously computed search
directions.

A well--known difficulty of this monomial construction is numerical
instability: the vectors $\mathcal{A}^j u_0$ rapidly lose linear
independence as $j$ increases, causing the Gram matrix in the $A$--inner
product, $W = U^\top A U$, to become severely ill--conditioned.
To mitigate this effect, the
monomial basis can be replaced by a shifted and scaled Chebyshev
basis,
\[
u_j=T_j(\widehat{\mathcal{A}})u_0,
\qquad
j=0,\dots,s-1,
\]
where $T_j$ denotes the Chebyshev polynomial of the first kind and
$\widehat{\mathcal{A}}$ is a shifted and scaled form of
$\mathcal{A}$, defined more precisely in
Section~\ref{sec:sPCG-cheb}.
Under assumptions including normality of the matrix, a particular
spectral distribution of the starting vector, and an appropriate
spectral region, Philippe and Reichel~\cite{Reichel2012} derive
favorable conditioning bounds for raw Chebyshev Krylov bases. Their
results provide the theoretical motivation for adopting a Chebyshev
basis in place of the monomial basis in the present work.

Two reduced Gram systems must be solved at every outer iteration.
To solve them efficiently, we use a fixed, moderate number of Forward
Gauss--Seidel (FGS) sweeps. This choice is motivated by the algebraic connection between one FGS
sweep on a Gram system and a sequential Modified Gram--Schmidt (MGS)
projection process for a particular right-hand side. Although this
equivalence does not apply literally to the general right-hand sides
arising in PCG--S, it provides a structural interpretation of the
triangular Gram splitting used by FGS.
An earlier version of the FGS--based Gram--solver analysis appeared in~\cite{ThomasDAmbra2025}. The present work extends it with a structural moment-based analysis of
Chebyshev Gram matrices, a performance model for multi--GPU architectures,
and a large-scale experimental evaluation within the BootCMatchGX
software framework.

Throughout this work, the preconditioner is regarded
as fixed. For a given preconditioned operator, the choice of the
polynomial basis determines the coordinate representation of the
local Krylov blocks and can significantly affect the conditioning of
the associated reduced Gram matrices. Our analysis focuses on this
basis-induced algebraic structure, which is distinct from the
application-dependent problem of constructing the preconditioner
$M$.

The main contributions of this work can be summarized as follows.

\begin{itemize}
\item We propose a scalable $s$--step PCG formulation that combines
a Chebyshev--stabilized Krylov basis with FGS sweeps for the reduced
Gram solves. The formulation targets multi--GPU systems by reducing
global synchronization points and reformulating low
arithmetic--intensity BLAS--1 operations as block BLAS--2/3 kernels
(matrix--vector and matrix--matrix products), enabling efficient
utilization of GPU throughput.

\item For the unpreconditioned case, we derive an exact moment-based
representation of the Gram matrix associated with a raw Chebyshev
Krylov block. This representation identifies how the polynomial basis
and the induced spectral distribution affect the matrix entries and
provides a structural interpretation of the favorable conditioning
observed for moderate step sizes. We also discuss the corresponding interpretation in
terms of the symmetrically preconditioned operator.

\item Building on inexact Krylov theory and on the algebraic
connection between FGS and MGS, we formulate a practical
reduced-residual criterion for controlling the accuracy of the
inexact Gram solves. Numerical experiments assess its effectiveness
in preserving the observed convergence behavior of the outer
$s$--step PCG iteration.

\item We develop a performance model that quantifies the trade-off
between reduced global synchronization and increased local
computation. We compare its predictions with strong- and weak-scaling
experiments on modern multi--GPU supercomputers.

\item To the best of our knowledge, this is the first publicly available fully
distributed multi--GPU implementation and large-scale experimental
evaluation of preconditioned $s$--step CG. The method is implemented
within the open-source BootCMatchGX framework\footnote{\url{https://github.com/bootcmatch/BootCMatchGX}}, enabling reproducibility
and further research on scalable communication--reducing Krylov methods.
\end{itemize}

The remainder of the paper is organized as follows.
Section~\ref{sec:sPCG-cheb} introduces the $s$--step PCG method with Chebyshev Krylov basis.
Section~\ref{sec:erroranalysis} analyzes the inexact solution of the Chebyshev Gram systems and discusses its stability and convergence implications.
Section~\ref{sec:impl} summarizes details on our practical implementation of the method and presents the performance model.
Section~\ref{sec:related} reviews related work.
Section~\ref{sec:results} reports numerical results on large--scale GPU platforms, and Section~\ref{sec:conc} concludes the paper with final remarks and future work.

\section{The $s$--step PCG method with Chebyshev Krylov basis}
\label{sec:sPCG-cheb}

The $s$--step preconditioned Conjugate Gradient (PCG--S) method considered in this work is summarized in Algorithm~\ref{alg:pcgs}.
Given an initial guess $x^{(0)}$ and residual $r^{(0)} = b - Ax^{(0)}$, the PCG--S method proceeds by grouping
$s$ consecutive Krylov iterations into a single outer iteration.
At each outer iteration $k$, a block Krylov subspace of dimension $s$ is
constructed using a Chebyshev polynomial basis associated with the current
residual $r^{(k)}$.

Let $\lambda_{\min}$ and $\lambda_{\max}$ denote the
values used to scale the preconditioned operator
$\mathcal{A}=M^{-1}A$, and define
\[
\widehat{\mathcal A}
=
\frac{2\mathcal{A}-(\lambda_{\max}+\lambda_{\min})I}
     {\lambda_{\max}-\lambda_{\min}}.
\]
Since $A$ and $M$ are SPD, the eigenvalues of $\mathcal{A}$ are real
and strictly positive. In our implementation, we set
$\lambda_{\min}=0$ and estimate $\lambda_{\max}$ once, before
starting the PCG--S iteration, by applying the power method to
$\mathcal{A}$. The power iteration is terminated when
$
\left|\lambda_{\max}^{(\ell)}
-\lambda_{\max}^{(\ell-1)}\right|
<
10^{-3}\left|\lambda_{\max}^{(\ell)}\right|,$
or after $50$ iterations. Each step requires one sparse
SPMV with $A$ and one application of $M^{-1}$.
This inexpensive initialization provided sufficiently robust scaling
in all the numerical experiments reported in this work.

The Chebyshev Krylov basis is defined as
\[
Z_k =
\bigl[
z_1^{(k)}, \dots, z_s^{(k)}
\bigr],
\qquad
z_j^{(k)}
=
T_{j-1}(\widehat{\mathcal A})M^{-1}r^{(k)},
\]
where $T_j$ denotes the Chebyshev polynomial of the first kind of
degree $j$. In particular, $z_1^{(k)}=M^{-1}r^{(k)}$.
The construction of $Z_k$, together with the augmented product matrix
\[
  P_k = \bigl[r^{(k)},\; Az_1^{(k)},\;\dots,\; Az_s^{(k)}\bigr] \in \mathbb{R}^{n\times(s+1)},
\]
whose first column carries the current residual whereas the remaining columns contain the products $AZ_k$, is obtained by means of the so--called Matrix Power Kernel (MPK), as described in Algorithm~\ref{alg:MPK}.
The parameters $\alpha$, $\sigma$, and $\gamma$, appearing in
Algorithm~\ref{alg:MPK} and collected in the \texttt{basis\_info} parameter, follow from the three--term recurrence
satisfied by the Chebyshev polynomials,
\[
T_{j+1}(x)=2xT_j(x)-T_{j-1}(x).
\]
Since $\widehat{\mathcal A}=\alpha M^{-1}A-\sigma I,$
the recurrence for the basis vectors is
\[
z_{j+1}
=
2\alpha M^{-1}Az_j
-
2\sigma z_j
-
\gamma z_{j-1},
\]
where
\[
\alpha =
\frac{2}{\lambda_{\max}-\lambda_{\min}},
\qquad
\sigma =
\frac{\lambda_{\max}+\lambda_{\min}}
     {\lambda_{\max}-\lambda_{\min}},
\qquad
\gamma=1.
\]

The block of search directions is denoted by
$Q_k\in\mathbb{R}^{n\times s}$. 
The approximate solution is updated as
\[
x^{(k+1)}=x^{(k)}+Q_k\alpha_k,
\]
where $\alpha_k\in\mathbb{R}^s$ is computed from the
reduced Gram system
\[
W_k\alpha_k=m_k,
\qquad
W_k:=Q_k^\top A Q_k,
\qquad
m_k:=Q_k^\top r^{(k)}.
\]
In the proposed method, the
system is instead solved approximately by a prescribed number of FGS sweeps.

The residual is then updated according to
\[
r^{(k+1)} = r^{(k)} - A Q_k \alpha_k .
\]

To construct a new block that is $A$--conjugate to the previous
search-direction block, the raw Chebyshev block $Z_{k+1}$ is corrected
by solving a second reduced Gram system with multiple right-hand
sides. Specifically, let
\begin{equation}
\label{blockdirupdate}
Q_{k+1}=Z_{k+1}+Q_k\beta_k,
\end{equation}
where $\beta_k\in\mathbb{R}^{s\times s}$. Imposing the conjugacy condition $Q_{k+1}^\top A Q_k=0$ and using the symmetry of $W_k=Q_k^\top A Q_k$
gives
\[
W_k\beta_k=-Q_k^\top A Z_{k+1}.
\]
In Algorithm~\ref{alg:pcgs}, the product $AZ_{k+1}$ is stored in the
variable $AQ_{\mathrm{new}}$. Hence, the right-hand side is assembled
at line~13 as $
b_2\gets -Q^\top AQ_{\mathrm{new}}.$

The new search-direction block is then obtained from
Equation~\eqref{blockdirupdate}, while its product with $A$ is updated
as
\[
A Q_{k+1}
=
A Z_{k+1}
+
(AQ_k)\beta_k.
\]
These two block updates correspond to line~15 of
Algorithm~\ref{alg:pcgs}.
When the reduced Gram system for $\beta_k$ is solved exactly, this
update is the direct block analogue of the classical CG recurrence
and enforces $A$--conjugacy between consecutive search-direction
blocks, while avoiding additional applications of $A$. With the
inexact FGS solution employed in this work, this conjugacy condition
is satisfied only approximately, with an accuracy governed by the
residual of the reduced Gram solve.

This block formulation also has important computational implications.
In classical CG, most algebraic operations consist of Level 1 Basic Linear Algebra Subprograms (BLAS--1) vector kernels
(dot products and axpy updates), which have low arithmetic intensity and
limited efficiency on modern accelerators.
In contrast, the $s$--step formulation naturally aggregates vectors into blocks,
so that these operations can be reformulated as matrix--vector and matrix--matrix
products involving the block bases $Z_k$, $Q_k$, and $AQ_k$.
As a result, many BLAS--1 operations are replaced by BLAS--2/3 kernels
(e.g., GEMV and GEMM), increasing arithmetic intensity and enabling
more efficient utilization of GPU architectures.

\begin{algorithm}[H]
\caption{Preconditioned $s$--step Conjugate Gradient (PCG--S) Algorithm}
\label{alg:pcgs}
\begin{algorithmic}[1]
\Require $A \in \R^{n \times n}$, $b \in \R^n$, $x^{(0)}$, preconditioner $M$, step size $s$, tolerance $\tau$, maximum iterations $k_{\max}$, number of FGS sweeps $\nu$
\Ensure Approximate solution $x^{(k)}$
\State $r^{(0)} \gets b - A x^{(0)}$
\State $[Z,P] \gets \text{MPK}(A,r^{(0)},s,M,\text{basis\_info})$
\State $Q \gets Z$, \quad $AQ \gets P(:,2:\text{end})$
\For{$k=0,1,\dots,k_{\max}-1$}
   \State $W \gets Q^\top AQ$, \quad $b_1 \gets Q^\top P(:,1)$
   \State $\alpha \gets \mathrm{FGS}(W,b_1,\nu)$
   \State $x^{(k+1)} \gets x^{(k)} + Q \alpha$
   \State $r^{(k+1)} \gets r^{(k)} - AQ \alpha$
   \If{$\|r^{(k+1)}\| \leq \tau \|b\|$} \textbf{stop}
   \EndIf
   \State $[Z,P] \gets \text{MPK}(A,r^{(k+1)},s,M,\text{basis\_info})$
   \State $AQ_{\text{new}} \gets P(:,2:\text{end})$
   \State $b_2 \gets - Q^\top AQ_{\text{new}}$
   \State $\beta \gets \mathrm{FGS}(W,b_2,\nu)$
   \State $Q \gets Z + Q \beta$, \quad $AQ \gets AQ_{\text{new}} + AQ \beta$
\EndFor
\end{algorithmic}
\end{algorithm}
\begin{algorithm}[H]
\caption{\textsc{MPK} for Chebyshev basis}
\label{alg:MPK}
\begin{algorithmic}[1]
\Require $A\in\R^{n\times n}$, $r\in\R^n$, step size $s\ge 1$, preconditioner $M$, \text{basis\_info}
\Ensure Matrices $Z=[z_1,\dots,z_s]$, $P=[p_1,\dots,p_s,p_{s+1}]$
\State $p_1 \gets r$; \quad $z\!p_1 \gets p_1$; \quad $z_1 \gets  M^{-1} z\!p_1$
\If{$s>1$}
  \State $p_2 \gets A z_1$
  \State $z\!p_2 \gets \alpha\,p_2 - \sigma\,z\!p_1$
  \State $z_2 \gets  M^{-1} z\!p_2$
  \If{$s>2$}
    \For{$q=2$ \textbf{to} $s-1$}
      \State $p_{q+1} \gets A z_q$
      \State $z\!p_{q+1} \gets 2\alpha\,p_{q+1} - 2\sigma\,z\!p_q - \gamma\,z\!p_{q-1}$
      \State $z_{q+1} \gets M^{-1} z\!p_{q+1}$
    \EndFor
  \EndIf
\EndIf
\State $p_{s+1} \gets A z_s$
\State \textbf{return} $Z=[z_1,\dots,z_s]$, \; $P=[p_1,\dots,p_s,p_{s+1}]$
\end{algorithmic}
\end{algorithm}

In the following, we analyze the use of a fixed moderate number of FGS
sweeps (function FGS at lines~6 and~14 of
Algorithm~\ref{alg:pcgs}) to approximate the reduced Gram solves
arising in PCG--S and identify the conditions under which the
resulting inexact inner solver remains compatible with the
convergence of the outer iteration.

\section{Iterated Gauss--Seidel solution of the Chebyshev Gram systems}
\label{sec:erroranalysis}

General inexact Krylov theory
\cite{vandenEshofSleijpen2004,bouras2005}
shows that outer convergence can be preserved when the perturbations
introduced during the iteration are controlled relative to the
current residual. Following this general principle, we monitor the
accuracy of the approximate Gram solve through the practical
relative-residual criterion
\begin{equation}
\|m_k-W_k\alpha_k\|_2
\leq
\delta_k\|m_k\|_2,
\label{cond:inexactKrylov1}
\end{equation}
where $\delta_k$ controls the accuracy of the reduced solve. The
relation between this criterion and the rigorous perturbation bounds
of inexact Krylov theory is discussed in
Remark~\ref{remark:inexactkrylov}.

Under the uniform bound $\delta_k\leq\delta$, we use the heuristic
guideline
\begin{equation}
N_{\mathrm{outer}}\,\delta\lesssim1,
\label{cond:inexactKrylov2}
\end{equation}
to prevent the accumulated inner inaccuracies from becoming
dominant over the course of the outer iteration. This guideline is
not claimed to be a rigorous sufficient condition.

\begin{remark}
\label{remark:inexactkrylov}
We emphasize that condition~\eqref{cond:inexactKrylov1} represents a practical tolerance for the inner Gram solve, while condition~\eqref{cond:inexactKrylov2} should be interpreted as a heuristic
guideline rather than as a rigorously derived bound. A fully rigorous inexact Krylov analysis
(see~\cite{vandenEshofSleijpen2004,bouras2005})
would control the perturbation relative to $\|r^{(k)}\|$ rather than
$\|m_k\|$. Since $m_k=Q_k^\top r^{(k)}$, relating these quantities requires
additional control of the relation between
$\|Q_k^\top r^{(k)}\|$ and $\|r^{(k)}\|$. Such control is not
established here, and we therefore do not claim that the practical
criterion is equivalent to the perturbation condition used in the
general theory of inexact Krylov methods. Instead, the condition~\eqref{cond:inexactKrylov2} is used as a
practical guideline for selecting the inner accuracy. Its suitability in
the present context is supported by the numerical experiments reported in
Sections~\ref{sec:gram-experiments} and~\ref{sec:results}.
\end{remark}

\begin{remark}
Throughout this section, the outer iteration index~$k$ is omitted for notational simplicity. The analysis is carried out for the Gram system defining $\alpha_k$ and extends directly to the Gram system with multiple right--hand sides defining $\beta_k$.
\end{remark}

\subsection{Forward Gauss--Seidel iteration}
\label{FGS}

Let $W\in\mathbb{R}^{s\times s}$ be SPD, and let
\[
D=\operatorname{diag}(W).
\]
Since $W$ is SPD, the diagonal entries of $D$ are strictly positive.
We introduce the diagonally normalized Gram matrix
\[
W_c=D^{-1/2}WD^{-1/2}.
\]
The original reduced system
\[
W\alpha=m
\]
is equivalent to
\[
W_c\widetilde{\alpha}=\widetilde{m},
\qquad
\widetilde{\alpha}=D^{1/2}\alpha,
\qquad
\widetilde{m}=D^{-1/2}m.
\]
Since $W_c$ has unit diagonal, it can be split as
\[
W_c=I+L+L^\top,
\]
where $L$ denotes the strictly lower triangular part of $W_c$.
The FGS iteration applied to the normalized system computes
\begin{equation}
\label{eq:fgs-iteration}
(I+L)\widetilde{\alpha}^{(\nu+1)}
=
\widetilde{m}-L^\top\widetilde{\alpha}^{(\nu)}.
\end{equation}
Equivalently,
\[
\widetilde{\alpha}^{(\nu+1)}
=
(I+L)^{-1}\widetilde{m}
-
(I+L)^{-1}L^\top\widetilde{\alpha}^{(\nu)}.
\]
The corresponding iteration matrix is therefore
\[
G=-(I+L)^{-1}L^\top.
\]
This normalized formulation is algebraically equivalent to applying
FGS directly to $W\alpha=m$ using the splitting
\[
W=D+L_W+L_W^\top,
\]
where $L_W$ is the strictly lower triangular part of $W$. Indeed,
$L=D^{-1/2}L_WD^{-1/2}$.
\begin{proposition}[Residual after $\nu$ FGS sweeps]
\label{prop:fgs-residual}
Let
\[
W_c=I+L+L^\top,
\]
where $L$ is strictly lower triangular. Let
$\widetilde{\alpha}^{(0)}=0$, and let
$\widetilde{\alpha}^{(\nu)}$ be obtained after $\nu$ forward
Gauss--Seidel sweeps applied to
\[
W_c\widetilde{\alpha}=\widetilde{m}.
\]
Then the residual of the normalized system satisfies
\begin{equation}
\label{eq:fgs-residual-formula}
\widetilde{\rho}^{(\nu)}
:=
\widetilde{m}
-
W_c\widetilde{\alpha}^{(\nu)}
=
(-1)^\nu
\left(
L^\top(I+L)^{-1}
\right)^\nu
\widetilde{m}.
\end{equation}
Consequently,
\begin{equation}
\label{eq:fgs-residual-bound}
\left\|
\widetilde{\rho}^{(\nu)}
\right\|
\leq
\left\|
L^\top(I+L)^{-1}
\right\|^\nu
\left\|
\widetilde{m}
\right\|.
\end{equation}
\end{proposition}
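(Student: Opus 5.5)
The plan is to derive a one-step recurrence for the residual directly from the FGS update \eqref{eq:fgs-iteration}, and then to unroll it. Write $\widetilde{\rho}^{(\nu)}=\widetilde{m}-W_c\widetilde{\alpha}^{(\nu)}$. Using the splitting $W_c=(I+L)+L^\top$, the update \eqref{eq:fgs-iteration} can be rewritten as
\[
(I+L)\widetilde{\alpha}^{(\nu+1)}=\widetilde{m}-L^\top\widetilde{\alpha}^{(\nu)} .
\]
Subtracting $L^\top\widetilde{\alpha}^{(\nu+1)}$ from both sides and rearranging gives
\[
\widetilde{\rho}^{(\nu+1)}=\widetilde{m}-W_c\widetilde{\alpha}^{(\nu+1)}
=L^\top\bigl(\widetilde{\alpha}^{(\nu)}-\widetilde{\alpha}^{(\nu+1)}\bigr).
\]

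Next I will express the increment $\widetilde{\alpha}^{(\nu+1)}-\widetilde{\alpha}^{(\nu)}$ through the current residual. Rewrite the update as $(I+L)\bigl(\widetilde{\alpha}^{(\nu+1)}-\widetilde{\alpha}^{(\nu)}\bigr)=\widetilde{m}-(I+L+L^\top)\widetilde{\alpha}^{(\nu)}=\widetilde{\rho}^{(\nu)}$. Since $I+L$ is unit lower triangular, it is invertible, so $\widetilde{\alpha}^{(\nu+1)}-\widetilde{\alpha}^{(\nu)}=(I+L)^{-1}\widetilde{\rho}^{(\nu)}$. Substituting into the previous display yields the recurrence
\[
\widetilde{\rho}^{(\nu+1)}=-L^\top(I+L)^{-1}\widetilde{\rho}^{(\nu)} .
\]

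Because $\widetilde{\alpha}^{(0)}=0$, the initial residual is $\widetilde{\rho}^{(0)}=\widetilde{m}$. Induction on $\nu$ then gives \eqref{eq:fgs-residual-formula}, with the sign factor $(-1)^\nu$ coming from the minus sign in each step. The bound \eqref{eq:fgs-residual-bound} follows by taking norms in any consistent, i.e.\ submultiplicative, operator norm. This uses $\|(-1)^\nu X^\nu\|\le\|X\|^\nu$ with $X=L^\top(I+L)^{-1}$.

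No step here is a serious obstacle. The only point needing care is to recognize that the residual propagates by the similar matrix $L^\top(I+L)^{-1}$ rather than by the error iteration matrix $G=-(I+L)^{-1}L^\top$. Indeed, $W_cG=-L^\top(I+L)^{-1}W_c$, which one can use as a cross-check of the recurrence.
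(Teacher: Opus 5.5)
Your proof is correct and is essentially the paper's argument: both derive the one-step recurrence $\widetilde{\rho}^{(\nu+1)}=-L^\top(I+L)^{-1}\widetilde{\rho}^{(\nu)}$ from the FGS update, unroll it by induction, and obtain the bound from submultiplicativity. The differences are cosmetic: the paper gets the recurrence from the identity $(I+L+L^\top)(I+L)^{-1}=I+L^\top(I+L)^{-1}$ and starts the induction at $\nu=1$, whereas you pass through the increment $(I+L)^{-1}\widetilde{\rho}^{(\nu)}$ and start from $\widetilde{\rho}^{(0)}=\widetilde{m}$.
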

\begin{proof}
The proof proceeds by induction. For $\nu=1$, the FGS iteration with
$\widetilde{\alpha}^{(0)}=0$ gives
\[
(I+L)\widetilde{\alpha}^{(1)}=\widetilde{m},
\]
and hence
\[
\widetilde{\alpha}^{(1)}
=
(I+L)^{-1}\widetilde{m}.
\]
Therefore,
\begin{align*}
\widetilde{\rho}^{(1)}
&=
\widetilde{m}
-
W_c\widetilde{\alpha}^{(1)}
\\
&=
\widetilde{m}
-
(I+L+L^\top)(I+L)^{-1}\widetilde{m}
\\
&=
-L^\top(I+L)^{-1}\widetilde{m},
\end{align*}
which proves~\eqref{eq:fgs-residual-formula} for $\nu=1$.
Assume that~\eqref{eq:fgs-residual-formula} holds for some $\nu\geq1$.
From the FGS update~\eqref{eq:fgs-iteration},
\[
(I+L)\widetilde{\alpha}^{(\nu+1)}
=
\widetilde{m}
-
L^\top\widetilde{\alpha}^{(\nu)}.
\]
Using
\[
(I+L+L^\top)(I+L)^{-1}
=
I+L^\top(I+L)^{-1},
\]
we obtain
\begin{align*}
\widetilde{\rho}^{(\nu+1)}
&=
\widetilde{m}
-
W_c\widetilde{\alpha}^{(\nu+1)}
\\
&=
\widetilde{m}
-
(I+L+L^\top)(I+L)^{-1}
\left(
\widetilde{m}
-
L^\top\widetilde{\alpha}^{(\nu)}
\right)
\\
&=
-L^\top(I+L)^{-1}
\left(
\widetilde{m}
-
(I+L+L^\top)\widetilde{\alpha}^{(\nu)}
\right)
\\
&=
-L^\top(I+L)^{-1}
\widetilde{\rho}^{(\nu)}.
\end{align*}
By the induction hypothesis,
\[
\widetilde{\rho}^{(\nu)}
=
(-1)^\nu
\left(
L^\top(I+L)^{-1}
\right)^\nu
\widetilde{m},
\]
and therefore
\[
\widetilde{\rho}^{(\nu+1)}
=
(-1)^{\nu+1}
\left(
L^\top(I+L)^{-1}
\right)^{\nu+1}
\widetilde{m}.
\]
This completes the induction. The bound
in~\eqref{eq:fgs-residual-bound} follows from the sub-multiplicativity
of the norm.
\end{proof}
Since
\[
\widetilde{\rho}^{(\nu)}
=
D^{-1/2}
\left(
m-W\alpha^{(\nu)}
\right),
\]
the bound in Proposition~\ref{prop:fgs-residual} controls the residual
of the original Gram system in the diagonally weighted norm
\[
\|v\|_{D^{-1}}
:=
\|D^{-1/2}v\|_2.
\]
\begin{corollary}[Asymptotic residual reduction rate]
\label{cor:spectral_radius}
Let
\[
H:=-L^\top(I+L)^{-1},
\qquad
\varrho:=\rho(H),
\]
where $H$ is the residual iteration matrix and $\rho(H)$ denotes its
spectral radius. Assume that $\widetilde m\neq0$ and that
$\widetilde{\alpha}^{(0)}=0$. Then, for every $\varepsilon>0$, there
exists a constant
\[
C=C(\varepsilon,W_c)>0,
\]
independent of $\nu$ and $\widetilde m$, such that
\begin{equation}
\label{eq:spectral_bound}
\frac{\|\widetilde{\rho}^{(\nu)}\|_2}
     {\|\widetilde m\|_2}
\leq
C(\varrho+\varepsilon)^\nu,
\qquad
\nu\geq1.
\end{equation}
More generally, the FGS iteration converges for every right-hand side
and every initial guess if and only if $\varrho<1$. For the residual
sequence generated from an arbitrary initial residual
$\widetilde{\rho}^{(0)}$, one has
\begin{equation}
\label{eq:asymptotic-residual-rate}
\limsup_{\nu\to\infty}
\|\widetilde{\rho}^{(\nu)}\|_2^{1/\nu}
\leq
\varrho.
\end{equation}
\end{corollary}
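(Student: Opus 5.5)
The plan is to read everything off the residual formula of Proposition~\ref{prop:fgs-residual}, which gives $\widetilde{\rho}^{(\nu)}=H^\nu\widetilde m$ with $H=-L^\top(I+L)^{-1}$. The bound \eqref{eq:spectral_bound} then reduces to the standard estimate on powers of a fixed matrix: for every $\varepsilon>0$ there is $C(\varepsilon,H)$ with $\|H^\nu\|_2\le C(\varrho+\varepsilon)^\nu$ for all $\nu\ge1$.

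I would prove this estimate in one of two ways.
\begin{itemize}
\item Via the Jordan form $H=SJS^{-1}$. Each Jordan block satisfies $\|J_i^\nu\|\le \sum_{j<s}\binom{\nu}{j}\varrho^{\nu-j}$, and the polynomial factor in $\nu$ is dominated by $(1+\varepsilon/\varrho)^\nu$ up to a constant. When $\varrho=0$, $H$ is nilpotent and $H^\nu=0$ for $\nu\ge s$, so the bound is trivial.
\item Alternatively, use the existence of an induced norm $\|\cdot\|_\ast$ with $\|H\|_\ast\le\varrho+\varepsilon$, together with norm equivalence in $\mathbb{R}^s$.
\end{itemize}
Since $H$ depends only on $W_c$, the constant $C$ depends only on $(\varepsilon,W_c)$ and not on $\nu$ or $\widetilde m$. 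Dividing $\|\widetilde\rho^{(\nu)}\|_2\le\|H^\nu\|_2\|\widetilde m\|_2$ by $\|\widetilde m\|_2\neq0$ gives \eqref{eq:spectral_bound}.

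For the convergence equivalence, I would work with the error iteration matrix $G=-(I+L)^{-1}L^\top$ instead. The FGS error satisfies $e^{(\nu)}=G^\nu e^{(0)}$. Convergence for every right-hand side and every initial guess is therefore equivalent to $G^\nu\to0$, which holds if and only if $\rho(G)<1$ (standard, again via the Jordan form). Then I would observe that $H=(I+L)G(I+L)^{-1}$: indeed $(I+L)\bigl(-(I+L)^{-1}L^\top\bigr)(I+L)^{-1}=-L^\top(I+L)^{-1}$. So $G$ and $H$ are similar and $\rho(G)=\varrho$. Equivalently, residuals and errors are related by $\widetilde\rho=W_c e$ with $W_c$ invertible, so convergence of one sequence is the same as convergence of the other.

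For \eqref{eq:asymptotic-residual-rate}, I would first note that the recursion in the proof of Proposition~\ref{prop:fgs-residual} never used $\widetilde\alpha^{(0)}=0$. Its inductive step shows $\widetilde\rho^{(\nu+1)}=H\widetilde\rho^{(\nu)}$ for any iterate, so $\widetilde\rho^{(\nu)}=H^\nu\widetilde\rho^{(0)}$ in general. If $\widetilde\rho^{(0)}=0$ the claim is trivial. Otherwise,
\[
\|\widetilde\rho^{(\nu)}\|_2^{1/\nu}\le C^{1/\nu}(\varrho+\varepsilon)\|\widetilde\rho^{(0)}\|_2^{1/\nu}\to\varrho+\varepsilon .
\]
Letting $\varepsilon\to0$ gives the limsup bound; this is essentially Gelfand's formula.

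The only step needing care is the uniform power bound, in particular making the constant independent of $\nu$ and handling the case $\varrho=0$ separately. Everything else is bookkeeping on top of Proposition~\ref{prop:fgs-residual}. I would also remark that since $W_c$ is SPD, the classical Ostrowski--Reich theorem gives $\varrho<1$ automatically, although the corollary as stated does not require it.
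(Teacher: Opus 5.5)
Your proposal is correct and takes the route the paper intends. The paper gives no written proof of this corollary and presents it as a direct consequence of Proposition~\ref{prop:fgs-residual}, via $\widetilde{\rho}^{(\nu)}=H^\nu\widetilde m$ and the standard bound $\|H^\nu\|_2\le C(\varrho+\varepsilon)^\nu$. Your extra details correctly fill in what the paper leaves implicit: the similarity $H=(I+L)G(I+L)^{-1}$ for the convergence equivalence, the observation that the inductive step gives $\widetilde{\rho}^{(\nu)}=H^\nu\widetilde{\rho}^{(0)}$ for any initial guess, and the separate treatment of $\varrho=0$.
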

\begin{remark}
Since $W_c$ is SPD, the Gauss--Seidel iteration is convergent and
therefore $\varrho<1$. Its convergence rate nevertheless depends on
the off-diagonal structure of $W_c$ through the residual iteration
matrix $H$. The
polynomial basis used to construct the Krylov block influences this
off-diagonal structure. The following subsection derives an exact
moment representation for the Gram matrix associated with a raw
Chebyshev block. This representation provides a structural
interpretation of the observed behavior, but it does not by itself
guarantee that $\varrho$ remains uniformly bounded away from one.
The effectiveness of a fixed number of FGS sweeps is therefore also
assessed empirically in Section~\ref{sec:gram-experiments}.
\end{remark}

\subsubsection{Structure of Chebyshev Gram matrices}
\label{sec:cheb-gram-structure}

In this subsection, we analyze the Gram matrices associated with a
raw Chebyshev Krylov block in the unpreconditioned setting, i.e.,
with $M=I$. This setting isolates the effect of the polynomial basis
from that of the preconditioner. The analysis describes the basis-induced structure of the reduced
Gram systems and provides a qualitative interpretation of their conditioning.
In PCG--S, the blocks $Q_k$ generated after the first outer iteration
also contain corrections inherited from previous blocks. Therefore,
the moment representation derived below does not apply directly to
every matrix $Q_k^\top A Q_k$ generated by the algorithm. Rather, it
provides a structural explanation of the effect of using a Chebyshev
basis for the underlying raw Krylov block.
Let $[\lambda_{\min},\lambda_{\max}]$ be a scaling interval containing
the spectrum of the SPD matrix $A$, and define
\[
\widehat A
=
\frac{
2A-(\lambda_{\max}+\lambda_{\min})I
}{
\lambda_{\max}-\lambda_{\min}
}.
\]
The spectrum of $\widehat A$ is contained in $[-1,1]$. For a nonzero
vector $r_0$, define the Chebyshev Krylov block
\[
S_{s}
:=
\bigl[
T_0(\widehat A)r_0,\,
T_1(\widehat A)r_0,\,
\dots,\,
T_{s-1}(\widehat A)r_0
\bigr],
\]
where $T_j$ denotes the Chebyshev polynomial of the first kind of
degree $j$.
Since $A$ is SPD, it admits an orthogonal eigendecomposition $
A=V\Lambda V^\top,$ with
$\Lambda=\mathrm{diag}(\lambda_1,\dots,\lambda_n),$
where $0<\lambda_1\leq\cdots\leq\lambda_n.$
Let $v_\ell$ denote the $\ell$th column of $V$, and define
$
c:=V^\top r_0,$ with $c_\ell=v_\ell^\top r_0.$
The transformed eigenvalues are $
\widehat{\lambda}_\ell
=(
2\lambda_\ell-\lambda_{\max}-\lambda_{\min})/(
\lambda_{\max}-\lambda_{\min})
\in[-1,1].$
We consider both the standard-inner-product Gram matrix
\[
W^{(0)}
:=
S_{s}^\top S_{s}
\]
and the $A$--inner-product Gram matrix
\[
W^{(A)}
:=
S_{s}^\top A S_{s}.
\]
The latter is the form directly related to the reduced Gram matrices
arising in PCG--S.
The entries of these matrices are
\begin{align}
W^{(0)}_{ij}
&=
\sum_{\ell=1}^n
|c_\ell|^2
T_{i-1}(\widehat{\lambda}_\ell)
T_{j-1}(\widehat{\lambda}_\ell),
\label{eq:gram-entry}
\\
W^{(A)}_{ij}
&=
\sum_{\ell=1}^n
\lambda_\ell|c_\ell|^2
T_{i-1}(\widehat{\lambda}_\ell)
T_{j-1}(\widehat{\lambda}_\ell),
\label{eq:A-gram-entry}
\end{align}
for $1\leq i,j\leq s$.\\
For $x\in[-1,1]$ and nonnegative integers $m$ and $n$, the classical Chebyshev product identity is
\begin{equation}
\label{eq:cheb-product}
T_m(x)T_n(x)
=
\frac{1}{2}
\left(
T_{m+n}(x)+T_{|m-n|}(x)
\right).
\end{equation}
For $p\geq0$, define the moments
\[
\mu_p^{(0)}
:=
\sum_{\ell=1}^n
|c_\ell|^2T_p(\widehat{\lambda}_\ell)
\]
and
\[
\mu_p^{(A)}
:=
\sum_{\ell=1}^n
\lambda_\ell|c_\ell|^2
T_p(\widehat{\lambda}_\ell).
\]
\begin{theorem}[Chebyshev Gram matrix structure]
\label{thm:gram-structure}
For $\star\in\{0,A\}$, the entries of the corresponding Gram matrix
satisfy
\begin{equation}
\label{eq:gram-moments}
W^{(\star)}_{ij}
=
\frac{1}{2}
\left(
\mu_{i+j-2}^{(\star)}
+
\mu_{|i-j|}^{(\star)}
\right),
\qquad
1\leq i,j\leq s.
\end{equation}
Moreover, both $W^{(0)}$ and $W^{(A)}$ are symmetric positive
semidefinite. If $S_{s}$ has full column rank, then both matrices
are SPD.
\end{theorem}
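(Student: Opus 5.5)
The plan is to start from the entrywise spectral formulas \eqref{eq:gram-entry} and \eqref{eq:A-gram-entry}. I would treat both cases at once by writing the weights as $w_\ell^{(0)}=|c_\ell|^2$ and $w_\ell^{(A)}=\lambda_\ell|c_\ell|^2$, so that
\[
W^{(\star)}_{ij}=\sum_{\ell=1}^n w_\ell^{(\star)}\,T_{i-1}(\widehat\lambda_\ell)\,T_{j-1}(\widehat\lambda_\ell).
\]
These formulas follow from the eigendecomposition $A=V\Lambda V^\top$. Concretely, $T_{j-1}(\widehat A)r_0=V\,T_{j-1}(\widehat\Lambda)c$ and $A=V\Lambda V^\top$, and orthogonality of $V$ then reduces $S_s^\top S_s$ and $S_s^\top A S_s$ to weighted sums over the eigenvalues.

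Since every $\widehat\lambda_\ell\in[-1,1]$, the product identity \eqref{eq:cheb-product} applies at each node. Taking $m=i-1$ and $n=j-1$, so that $m+n=i+j-2$ and $|m-n|=|i-j|$, we substitute it inside the sum. Linearity of the finite sum then splits the result into $\tfrac12\sum_\ell w_\ell^{(\star)}T_{i+j-2}(\widehat\lambda_\ell)$ and $\tfrac12\sum_\ell w_\ell^{(\star)}T_{|i-j|}(\widehat\lambda_\ell)$. By definition these two terms are $\tfrac12\mu^{(\star)}_{i+j-2}$ and $\tfrac12\mu^{(\star)}_{|i-j|}$, which gives \eqref{eq:gram-moments}. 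Strictly speaking, the identity holds for all real $x$ as a polynomial identity, because it holds on $[-1,1]$ via $x=\cos\theta$ and $\cos a\cos b=\tfrac12(\cos(a+b)+\cos(a-b))$. So the containment of the spectrum in $[-1,1]$ is not even essential.

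For the definiteness claims, I would argue directly from the factorizations rather than from the moments. Symmetry is immediate from $W^{(0)}=S_s^\top S_s$ and $W^{(A)}=S_s^\top A S_s$ with $A=A^\top$. For any $y\in\mathbb{R}^s$ we have $y^\top W^{(0)}y=\|S_sy\|_2^2\ge0$ and $y^\top W^{(A)}y=(S_sy)^\top A(S_sy)\ge0$, using that $A$ is SPD. If $S_s$ has full column rank, then $y\neq0$ implies $S_sy\neq0$, and both quadratic forms are strictly positive because $A$ is positive definite. Hence both matrices are SPD.

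There is no real obstacle here. The only step needing care is the index bookkeeping when converting the degrees $i-1,j-1$ into the moment indices $i+j-2$ and $|i-j|$, together with noting that the real-valued setting makes $|c_\ell|^2=c_\ell^2$ consistent with the derivation of \eqref{eq:gram-entry}.
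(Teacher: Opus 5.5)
Your proposal is correct and follows the paper's proof: you apply the Chebyshev product identity \eqref{eq:cheb-product} entrywise to \eqref{eq:gram-entry} and \eqref{eq:A-gram-entry} with degrees $i-1$ and $j-1$, and you obtain definiteness from the Gram factorizations. The only cosmetic difference is that you argue positivity through the quadratic form $(S_sy)^\top A(S_sy)$, whereas the paper writes $W^{(A)}=(A^{1/2}S_s)^\top(A^{1/2}S_s)$ and uses that $A^{1/2}$ is nonsingular; the two arguments are equivalent.
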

\begin{proof}
Equation~\eqref{eq:gram-moments} follows by applying the identity~\eqref{eq:cheb-product} to
equations~\eqref{eq:gram-entry} and~\eqref{eq:A-gram-entry}. In the
standard inner product,
\begin{align*}
W^{(0)}_{ij}
&=
\sum_{\ell=1}^n
|c_\ell|^2
T_{i-1}(\widehat{\lambda}_\ell)
T_{j-1}(\widehat{\lambda}_\ell)
\\
&=
\frac{1}{2}
\sum_{\ell=1}^n
|c_\ell|^2
\left[
T_{i+j-2}(\widehat{\lambda}_\ell)
+
T_{|i-j|}(\widehat{\lambda}_\ell)
\right]
\\
&=
\frac{1}{2}
\left(
\mu_{i+j-2}^{(0)}
+
\mu_{|i-j|}^{(0)}
\right).
\end{align*}
The proof for $W^{(A)}$ is identical after replacing
$|c_\ell|^2$ with $\lambda_\ell|c_\ell|^2$.
Positive semidefiniteness follows from
\[
W^{(0)}=S_{s}^\top S_{s}
\]
and
\[
W^{(A)}
=
(A^{1/2}S_{s})^\top(A^{1/2}S_{s}).
\]
Since $A^{1/2}$ is nonsingular, both matrices are positive definite
whenever $S_{s}$ has full column rank.
\end{proof}
\begin{corollary}[Diagonal entries]
\label{cor:diagonal}
For $\star\in\{0,A\}$, the diagonal entries satisfy
\[
W^{(\star)}_{ii}
=
\frac{1}{2}
\left(
\mu_0^{(\star)}
+
\mu_{2i-2}^{(\star)}
\right),
\qquad
1\leq i\leq s.
\]
In particular,
\[
\mu_0^{(0)}
=
\|r_0\|_2^2
\]
and
\[
\mu_0^{(A)}
=
r_0^\top A r_0
=
\|r_0\|_A^2.
\]
\end{corollary}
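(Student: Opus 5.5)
The plan is to specialize Theorem~\ref{thm:gram-structure} to the diagonal and then evaluate the zeroth moments directly. There are two steps.

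\emph{Diagonal entries.} Set $j=i$ in~\eqref{eq:gram-moments}. Then $i+j-2=2i-2$ and $|i-j|=0$, so
\[
W^{(\star)}_{ii}=\tfrac12\bigl(\mu^{(\star)}_{2i-2}+\mu^{(\star)}_0\bigr)
\]
for $\star\in\{0,A\}$ and $1\le i\le s$. This is exactly the claimed formula. As a sanity check, one can also derive it from the product identity~\eqref{eq:cheb-product} with $m=n=i-1$, which gives $T_{i-1}^2=\tfrac12(T_{2i-2}+T_0)$.

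\emph{Zeroth moments.} Since $T_0\equiv1$, we have
\[
\mu_0^{(0)}=\sum_\ell |c_\ell|^2=\|c\|_2^2 ,
\qquad
\mu_0^{(A)}=\sum_\ell \lambda_\ell|c_\ell|^2 .
\]
With $c=V^\top r_0$ and $V$ orthogonal, $\|c\|_2=\|r_0\|_2$, so $\mu_0^{(0)}=\|r_0\|_2^2$. For the second moment, $A=V\Lambda V^\top$ gives
\[
r_0^\top A r_0=c^\top\Lambda c=\sum_\ell\lambda_\ell c_\ell^2 .
\]
This equals $\|r_0\|_A^2$ by the definition of the $A$-norm, which is valid because $A$ is SPD.

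Neither step presents a real obstacle. The only point needing care is that $|c_\ell|^2=c_\ell^2$, which holds because everything is real. An alternative route is to read the zeroth moments off the $(1,1)$ entries: $W^{(0)}_{11}=r_0^\top r_0$ and $W^{(A)}_{11}=r_0^\top A r_0$, since $T_0(\widehat A)=I$, and the diagonal formula at $i=1$ gives $W^{(\star)}_{11}=\mu^{(\star)}_0$, which yields the same identities.
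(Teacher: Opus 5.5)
Your proposal is correct and matches the paper's approach. The paper gives no separate proof and treats the corollary as an immediate consequence of Theorem~\ref{thm:gram-structure}: set $j=i$, then evaluate the zeroth moments using $T_0\equiv 1$, $c=V^\top r_0$ with $V$ orthogonal, and $A=V\Lambda V^\top$, exactly as you do. Your cross-check through the $(1,1)$ entries is also valid.
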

Theorem~\ref{thm:gram-structure} shows that the entries of each
Chebyshev Gram matrix are completely determined by two structured
contributions from the corresponding moment sequence.  The term $\mu_{|i-j|}^{(\star)}$ has a Toeplitz-like
structure, whereas $\mu_{i+j-2}^{(\star)}$ has a Hankel-like
structure. Thus, the Gram matrix is the sum of a Toeplitz-like and a
Hankel-like component. Cancellation of higher-order moments may
reduce many off-diagonal entries, although such cancellation is not
guaranteed for an arbitrary finite-dimensional spectral measure.
\begin{corollary}[Normalized Chebyshev Gram matrix]
\label{cor:structurecheby}
Let $\star\in\{0,A\}$ and assume that $S_{s}$ has full column rank.
Define
\[
D^{(\star)}
:=
\mathrm{diag}\bigl(W^{(\star)}\bigr)
\]
and
\[
W_c^{(\star)}
:=
\bigl(D^{(\star)}\bigr)^{-1/2}
W^{(\star)}
\bigl(D^{(\star)}\bigr)^{-1/2}.
\]
Then $W_c^{(\star)}$ is SPD, has unit diagonal, and can be written as
\[
W_c^{(\star)}
=
I+L^{(\star)}+\bigl(L^{(\star)}\bigr)^\top,
\]
where $L^{(\star)}$ is strictly lower triangular.
Define the normalized moments
\[
\widetilde{\mu}_p^{(\star)}
:=
\frac{\mu_p^{(\star)}}{\mu_0^{(\star)}}.
\]
Then
\begin{equation}
\label{eq:normalized-gram-entry}
\bigl(W_c^{(\star)}\bigr)_{ij}
=
\frac{
\widetilde{\mu}_{i+j-2}^{(\star)}
+
\widetilde{\mu}_{|i-j|}^{(\star)}
}{
\sqrt{
\left(
1+\widetilde{\mu}_{2i-2}^{(\star)}
\right)
\left(
1+\widetilde{\mu}_{2j-2}^{(\star)}
\right)
}
},
\qquad
1\leq i,j\leq s.
\end{equation}
Moreover,
\[
\left|
\bigl(W_c^{(\star)}\bigr)_{ij}
\right|
\leq1
\]
for every $i$ and $j$.
\end{corollary}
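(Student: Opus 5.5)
The argument combines Theorem~\ref{thm:gram-structure}, Corollary~\ref{cor:diagonal}, and elementary facts about congruence transformations. I would proceed in four steps.

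\emph{Step 1: SPD and unit diagonal.} Under full column rank, Theorem~\ref{thm:gram-structure} gives that $W^{(\star)}$ is SPD. Its diagonal entries are $W^{(\star)}_{ii}=e_i^\top W^{(\star)}e_i>0$, so $D^{(\star)}$ is SPD and $(D^{(\star)})^{-1/2}$ is well defined and nonsingular. Then $W_c^{(\star)}$ is a congruence of an SPD matrix by a nonsingular symmetric matrix, hence SPD. Its diagonal entries are $W^{(\star)}_{ii}/W^{(\star)}_{ii}=1$. Symmetry then gives the splitting $W_c^{(\star)}=I+L^{(\star)}+(L^{(\star)})^\top$, with $L^{(\star)}$ the strictly lower triangular part, exactly as in Section~\ref{FGS}.

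\emph{Step 2: formula \eqref{eq:normalized-gram-entry}.} By definition, $(W_c^{(\star)})_{ij}=W^{(\star)}_{ij}/\sqrt{W^{(\star)}_{ii}W^{(\star)}_{jj}}$. I would substitute \eqref{eq:gram-moments} in the numerator and Corollary~\ref{cor:diagonal} in the denominator. The factors $\tfrac12$ cancel. Dividing numerator and denominator by $\mu_0^{(\star)}$ then yields the stated expression. This requires $\mu_0^{(\star)}>0$. That holds because $\mu_0^{(0)}=\|r_0\|_2^2$ and $\mu_0^{(A)}=\|r_0\|_A^2$, and $r_0\neq0$ (which full column rank forces anyway). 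Note that $\sqrt{\mu_0^2}=\mu_0$ since $\mu_0>0$.

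\emph{Step 3: the bound $|(W_c^{(\star)})_{ij}|\le1$.} This is the only step with any content, and it is a Cauchy--Schwarz argument. For $\star=0$, write $W^{(0)}_{ij}=s_i^\top s_j$, where $s_i$ is the $i$th column of $S_s$. For $\star=A$, use the $A$-inner product $\langle s_i,s_j\rangle_A$, or equivalently the columns of $A^{1/2}S_s$. Cauchy--Schwarz gives $|W_{ij}|\le\sqrt{W_{ii}W_{jj}}$, hence the bound.

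An alternative route to Step 3: every $2\times2$ principal submatrix of the SPD matrix $W_c^{(\star)}$ is positive definite. Its determinant is $1-(W_c^{(\star)})_{ij}^2>0$, which gives even strict inequality for $i\neq j$.

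I expect no real obstacle. The points to state carefully are positivity of $\mu_0^{(\star)}$ and of the diagonal entries, so that all divisions and square roots are legitimate.
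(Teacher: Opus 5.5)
Your proposal is correct and follows essentially the same route as the paper. You substitute the moment representation of Theorem~\ref{thm:gram-structure} and Corollary~\ref{cor:diagonal} into $W^{(\star)}_{ij}/\sqrt{W^{(\star)}_{ii}W^{(\star)}_{jj}}$, cancel the factor $\mu_0^{(\star)}/2$, and obtain $|(W_c^{(\star)})_{ij}|\le 1$ from Cauchy--Schwarz. You also make explicit what the paper leaves implicit, namely the positivity of $\mu_0^{(\star)}$ and of the diagonal and the SPD property of $W_c^{(\star)}$ via congruence, and your $2\times2$ principal-minor variant gives strict inequality off the diagonal.
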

\begin{proof}
By Theorem~\ref{thm:gram-structure} and
Corollary~\ref{cor:diagonal},
\[
W^{(\star)}_{ij}
=
\frac{\mu_0^{(\star)}}{2}
\left(
\widetilde{\mu}_{i+j-2}^{(\star)}
+
\widetilde{\mu}_{|i-j|}^{(\star)}
\right)
\]
and
\[
W^{(\star)}_{ii}
=
\frac{\mu_0^{(\star)}}{2}
\left(
1+\widetilde{\mu}_{2i-2}^{(\star)}
\right).
\]
Therefore,
\[
\bigl(W_c^{(\star)}\bigr)_{ij}
=
\frac{
W^{(\star)}_{ij}
}{
\sqrt{
W^{(\star)}_{ii}W^{(\star)}_{jj}
}
},
\]
which gives Equation~\eqref{eq:normalized-gram-entry}. Since
$W^{(\star)}$ is a Gram matrix, the Cauchy--Schwarz inequality gives
\[
\left|W^{(\star)}_{ij}\right|
\leq
\sqrt{
W^{(\star)}_{ii}W^{(\star)}_{jj}
}.
\]
The asserted bound follows.
\end{proof}
\begin{remark}[Interpretation and limitations]
\label{remark:moment-decay}
Since $|T_p(x)|\leq1$ for $x\in[-1,1]$, the moments defined above are
uniformly bounded. For a fixed finite-dimensional problem, however,
they need not decay with $p$ and may exhibit persistent oscillations.
Therefore, equation~\eqref{eq:gram-moments} does not by itself imply
off-diagonal decay, diagonal dominance, or a uniform condition-number
bound. It nevertheless shows how the polynomial basis, the
eigenvalue distribution, and the components of the residual in the
eigenvector basis determine the Gram-matrix entries, providing a
structural interpretation of the empirical behavior examined in
Section~\ref{sec:gram-experiments}.
Although the explicit derivation above considers $M=I$, the
left-preconditioned case can be analyzed through the symmetric
representative $
\mathcal{B}:=M^{-1/2}AM^{-1/2}$
of $\mathcal{A}=M^{-1}A$. This transformation is used only for
analysis; the implemented PCG--S method remains left preconditioned.
If $z_j=T_{j-1}(\widehat{\mathcal A})M^{-1}r$ and
$y_j=M^{1/2}z_j$, then $y_j=T_{j-1}(\widehat{\mathcal B})M^{-1/2}r,$ and
$z_i^\top A z_j=y_i^\top\mathcal B y_j.$
Hence, an analogous moment representation, with the same
Toeplitz-plus-Hankel structure, holds for the $A$--Gram matrix of the
left-preconditioned raw block. No general improvement in conditioning
follows from this transformation alone.
\end{remark}

\subsection{Connection between FGS and MGS}
\label{sec:gs-mgs}

The behavior of FGS applied to a Gram system can be also interpreted
through its connection with the sequential projection operations
used in MGS. The following result makes this
connection explicit for a particular right-hand side.

\begin{theorem}[Ruhe’s equivalence {\cite{Ruhe83}}]
\label{thm:ruhe}
Let $P=[p_1,\dots,p_s]\in\mathbb{R}^{n\times s}$
have linearly independent, column-normalized vectors, and let $
W=P^\top P=I+L+L^\top,$
where $L$ is strictly lower triangular.

Consider one forward Gauss--Seidel sweep, starting from
$\alpha^{(0)}=0$, applied to
$W\alpha=e_1.$
Then, $
(I+L)\alpha^{(1)}=e_1.$

Define a sequence of vectors by
\[
v^{(1)}=p_1
\]
and, for $j=2,\dots,s$,
\[
h_j=\langle p_j,v^{(j-1)}\rangle,
\qquad
v^{(j)}=v^{(j-1)}-h_jp_j.
\]
Then
\[
P\alpha^{(1)}=v^{(s)}.
\]
Thus, one FGS sweep is algebraically equivalent to an MGS-type sequential projection sweep of $p_1$ against
$p_2,\dots,p_s$.
\end{theorem}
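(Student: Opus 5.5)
The plan is to read off the solution of the unit lower triangular system $(I+L)\alpha^{(1)}=e_1$ by forward substitution and to show by induction that the partial linear combinations of the columns of $P$ coincide with the vectors $v^{(j)}$ of the sequential projection sweep. Since $W=P^\top P$ has unit diagonal, we have $L_{jk}=\langle p_j,p_k\rangle$ for $j>k$. The first FGS sweep from $\alpha^{(0)}=0$ indeed reduces to $(I+L)\alpha^{(1)}=e_1$, as in the first step of the proof of Proposition~\ref{prop:fgs-residual}.

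\emph{Forward substitution.} Row $j$ of $(I+L)\alpha^{(1)}=e_1$ gives $\alpha^{(1)}_1=1$ and, for $j\ge 2$,
\[
\alpha^{(1)}_j=-\sum_{k<j}L_{jk}\alpha^{(1)}_k=-\Bigl\langle p_j,\ \sum_{k<j}\alpha^{(1)}_k p_k\Bigr\rangle .
\]
This suggests introducing the partial sums $w^{(j)}:=\sum_{k\le j}\alpha^{(1)}_k p_k$, so that $\alpha^{(1)}_j=-\langle p_j,w^{(j-1)}\rangle$ and $w^{(j)}=w^{(j-1)}+\alpha^{(1)}_j p_j$.

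\emph{Induction.} I will show $w^{(j)}=v^{(j)}$ for $j=1,\dots,s$.
\begin{itemize}
\item Base case: $w^{(1)}=\alpha^{(1)}_1p_1=p_1=v^{(1)}$.
\item Inductive step: if $w^{(j-1)}=v^{(j-1)}$, then $\alpha^{(1)}_j=-\langle p_j,v^{(j-1)}\rangle=-h_j$. Hence $w^{(j)}=v^{(j-1)}-h_jp_j=v^{(j)}$.
\end{itemize}
Taking $j=s$ yields $P\alpha^{(1)}=w^{(s)}=v^{(s)}$, which is the claim. As a by-product, the coefficients of the sweep are $\alpha^{(1)}=(1,-h_2,\dots,-h_s)^\top$.

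\emph{Interpretation.} Because $\|p_j\|_2=1$, each update $v^{(j)}=v^{(j-1)}-\langle p_j,v^{(j-1)}\rangle p_j=(I-p_jp_j^\top)v^{(j-1)}$ is an orthogonal projection against $p_j$. Applying these projections one after another, each to the current updated vector, is exactly the MGS pattern (as opposed to classical Gram--Schmidt, which would project the original $p_1$ against all $p_j$ at once). Linear independence of the columns of $P$ is only needed to guarantee that $W$ is SPD, so that the setting of Section~\ref{FGS} applies.

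I do not expect a genuine obstacle. The only point needing care is the inductive step: the inner product in row $j$ of the forward substitution must involve the \emph{updated} vector $v^{(j-1)}$, not $p_1$. This is precisely what distinguishes the MGS-type sweep from a classical Gram--Schmidt projection, and it is where the induction hypothesis is used.
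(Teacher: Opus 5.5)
Your proof is correct and follows the paper's argument essentially step for step. Both use forward substitution on $(I+L)\alpha^{(1)}=e_1$, then an induction showing that the partial sums $\sum_{i\le j}\alpha^{(1)}_i p_i$ coincide with $v^{(j)}$ through the identity $\alpha^{(1)}_j=-h_j$.
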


\begin{proof}
Forward substitution in
\[
(I+L)\alpha^{(1)}=e_1
\]
gives
\[
\alpha_1^{(1)}=1
\]
and
\[
\alpha_j^{(1)}
=
-\sum_{i=1}^{j-1}
\langle p_j,p_i\rangle\alpha_i^{(1)},
\qquad
j=2,\dots,s.
\]

We prove by induction that
\[
v^{(j)}
=
\sum_{i=1}^{j}
\alpha_i^{(1)}p_i.
\]
For $j=1$, this follows from
$v^{(1)}=p_1$ and $\alpha_1^{(1)}=1$. Suppose that
\[
v^{(j-1)}
=
\sum_{i=1}^{j-1}
\alpha_i^{(1)}p_i.
\]
Then
\[
h_j
=
\langle p_j,v^{(j-1)}\rangle
=
\sum_{i=1}^{j-1}\langle p_j,p_i\rangle\alpha_i^{(1)}
=
-\alpha_j^{(1)}.
\]
Consequently,
\[
v^{(j)}
=
v^{(j-1)}-h_jp_j
=
\sum_{i=1}^{j-1}\alpha_i^{(1)}p_i+\alpha_j^{(1)}p_j
=
\sum_{i=1}^{j}\alpha_i^{(1)}p_i.
\]
Taking $j=s$ gives
\[
v^{(s)}=P\alpha^{(1)}.
\]
\end{proof}

\paragraph{Geometric interpretation and repeated sweeps.}

For the particular system $W\alpha=e_1$, the first FGS sweep admits
the sequential-projection interpretation described above. Each
projection removes the component of the current vector along one
column of $P$. Since these columns need not be mutually orthogonal, a
subsequent projection may reintroduce a component along a previously
processed column. Thus, one sweep does not generally produce a vector
orthogonal to all of $p_2,\dots,p_s$.

Subsequent FGS sweeps are more naturally interpreted as
residual-correction steps for the Gram system. Since $W$ is SPD, the
iterates converge to $\alpha^\star=W^{-1}e_1$, and the associated
vectors $P\alpha^{(\nu)}$ converge to
$P\alpha^\star\in\operatorname{range}(P)$. Since
\[
P^\top P\alpha^\star=e_1,
\]
the limiting vector is orthogonal to $p_2,\dots,p_s$. 

\paragraph{Extension to the $A$--inner product.}

Let the columns of
$Q=[q_1,\dots,q_s]\in\mathbb{R}^{n\times s}$ be normalized in the
$A$--inner product, so that
\[
q_j^\top A q_j=1.
\]
Then
\[
W=Q^\top A Q=I+L+L^\top,
\]
where $L$ is strictly lower triangular.

One FGS sweep applied to
\[
W\alpha=e_1,
\qquad
\alpha^{(0)}=0,
\]
is algebraically equivalent to the sequential projection process
\[
v^{(1)}=q_1,
\]
\[
h_j=q_j^\top A v^{(j-1)},
\qquad
v^{(j)}=v^{(j-1)}-h_jq_j,
\qquad
j=2,\dots,s.
\]
The resulting vector satisfies
\[
Q\alpha^{(1)}=v^{(s)}.
\]
Thus, for this particular right-hand side, one FGS sweep is
equivalent to one MGS-type sequential projection sweep in the
$A$--inner product.

The following standard identity relates the conditioning parameter
arising in finite-precision analyses of MGS in the $A$--inner product
to the condition number of the reduced Gram system.

\begin{lemma}[Gram and basis condition numbers]
\label{lem:gram-kappa}
Let $Q\in\mathbb{R}^{n\times s}$ have full column rank, let
$A\in\mathbb{R}^{n\times n}$ be SPD, and define
\[
W=Q^\top A Q.
\]
Then
\[
\kappa_2(A^{1/2}Q)^2=\kappa_2(W).
\]
\end{lemma}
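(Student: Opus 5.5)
The plan is to reduce the statement to the singular value decomposition of the single matrix $B:=A^{1/2}Q\in\mathbb{R}^{n\times s}$. Since $A$ is SPD, it has a unique SPD square root $A^{1/2}$, so
\[
W=Q^\top A Q=Q^\top A^{1/2}A^{1/2}Q=(A^{1/2}Q)^\top(A^{1/2}Q)=B^\top B .
\]
Because $A^{1/2}$ is nonsingular and $Q$ has full column rank, $B$ also has full column rank $s$. Hence $W$ is SPD, exactly as in the proof of Theorem~\ref{thm:gram-structure}, and both condition numbers are finite and well defined. For the rectangular matrix $B$, the $2$-norm condition number is $\kappa_2(B)=\sigma_{\max}(B)/\sigma_{\min}(B)$, the ratio of its largest to its smallest (nonzero) singular value.

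Next, take a thin SVD $B=U\Sigma V^\top$, with $U\in\mathbb{R}^{n\times s}$ having orthonormal columns, $V\in\mathbb{R}^{s\times s}$ orthogonal, and $\Sigma=\mathrm{diag}(\sigma_1,\dots,\sigma_s)$ satisfying $\sigma_1\ge\cdots\ge\sigma_s>0$. Then
\[
W=B^\top B=V\Sigma^2V^\top .
\]
This is an orthogonal eigendecomposition of the SPD matrix $W$ with eigenvalues $\sigma_i^2$. For a symmetric positive definite matrix, the $2$-norm condition number is the ratio of its extreme eigenvalues, so
\[
\kappa_2(W)=\frac{\sigma_1^2}{\sigma_s^2}=\left(\frac{\sigma_1}{\sigma_s}\right)^2=\kappa_2(A^{1/2}Q)^2 .
\]

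There is no substantial obstacle. The only point needing care is to state explicitly that $\kappa_2$ of the rectangular factor means the ratio of extreme singular values, equivalently $\|B\|_2\,\|B^\dagger\|_2$, since $B$ has no ordinary inverse. One must also invoke full column rank to guarantee $\sigma_s>0$. An SVD-free alternative uses the norm identity and its pseudoinverse counterpart,
\[
\|W\|_2=\|B\|_2^2,\qquad \|W^{-1}\|_2=\|B^\dagger\|_2^2 ,
\]
and multiplies the two to obtain the same result.
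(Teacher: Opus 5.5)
Your proposal is correct and follows essentially the same route as the paper: write $W=F^\top F$ with $F=A^{1/2}Q$, note that the eigenvalues of $W$ are the squared singular values of $F$, and take the ratio of the extreme values. Your explicit use of full column rank to guarantee $\sigma_s>0$ and your definition of $\kappa_2$ for the rectangular factor are details the paper leaves implicit.
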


\begin{proof}
Let $F=A^{1/2}Q$. Then $W=F^\top F$. Since the eigenvalues
of $W$ are the squares of the singular values of $F$, it follows that
\[
\kappa_2(W)
=
\frac{\lambda_{\max}(W)}{\lambda_{\min}(W)}
=
\frac{\sigma_{\max}(F)^2}{\sigma_{\min}(F)^2}
=
\kappa_2(F)^2
=
\kappa_2(A^{1/2}Q)^2.
\]
\end{proof}
\paragraph{Implications for the reduced Gram solves.}
Theorem~\ref{thm:ruhe} establishes an exact algebraic connection
between one FGS sweep and a sequential MGS-type projection sweep for
the particular right-hand side $e_1$. The reduced systems arising in
PCG--S instead have the more general right-hand sides
$m_k=Q_k^\top r^{(k)}$
and $B_k=-Q_k^\top AZ_{k+1}$
for the solution and block-direction updates, respectively. For these
right-hand sides, the specific sequential-projection interpretation
established for $e_1$ does not apply directly. Nevertheless, it exploits the
triangular structure of the Gram matrix in a manner analogous to
sequential projection operations, thereby providing an algebraic
motivation for its use as an inexact solver for the reduced systems.
Classical finite-precision analyses of MGS in non-standard inner
products~\cite{thomas1991congressus,thomas1992congressus,
thomas1992conpar,rozloznik2012,lowery2014}
show that the numerical behavior of the projection process depends on
the conditioning of the transformed basis $A^{1/2}Q$. By
Lemma~\ref{lem:gram-kappa},
$\kappa_2(A^{1/2}Q)=\kappa_2(Q^\top A Q)^{1/2}=
\kappa_2(W)^{1/2}.$
Thus, the conditioning parameter appearing in the MGS analyses is
directly related to the condition number of the reduced Gram matrix.
These analyses provide relevant context for understanding the role of
basis conditioning in Gram-based projection processes, but they do
not directly yield finite-precision stability bounds for the general
FGS solves used in PCG--S.
Similarly, the conditioning results of Philippe and
Reichel~\cite{Reichel2012} for raw Chebyshev Krylov bases motivate the
choice of the polynomial basis. Their assumptions, however, do not
apply directly to all the corrected search-direction blocks generated
by PCG--S, and hence their results do not establish a uniform
conditioning bound for the reduced Gram matrices considered here.
The accuracy of the FGS solves is instead described by the residual
bounds in Section~\ref{FGS} and assessed numerically in
Section~\ref{sec:gram-experiments}. Overall, the analysis identifies
FGS as a natural factorization-free and accuracy-adjustable solver for
the reduced Gram systems: its triangular splitting acts directly on
the correlations among the basis vectors, while the residual bounds
provide a practical mechanism for controlling the accuracy required
by the outer PCG--S iteration.

\subsection{Numerical Experiments}
\label{sec:gram-experiments}

In this section, we assess experimentally the use of an inexact FGS
solver for the reduced Gram systems arising in PCG--S. The benchmark
consists of linear systems obtained from a 27--point finite-difference
discretization of the three-dimensional Poisson equation on a cube
with Dirichlet boundary conditions. The experiments were performed
on $64$ GPUs, corresponding to $16$ nodes connected by HDR
InfiniBand, with four GPUs per node. The global problem size is $64\times200^3=5.12\times10^8$ degrees of freedom.

The objective is to determine whether a fixed number of FGS sweeps
provides sufficiently accurate reduced solves without adversely
affecting the observed outer convergence. We examine the empirical
structure of the Gram matrices generated during the iteration, the
relative residuals of the reduced solves
(Figure~\ref{fig:resestimate_empirical}), and the resulting PCG--S
convergence histories
(Figure~\ref{fig:pcgsvsfgs_empirical}).

Figure~\ref{fig:gram_patterns} shows the absolute values of
representative diagonally normalized Gram matrices for $s=10$ at
successive outer iterations. Although the PCG--S blocks $Q_k$ include
corrections inherited from previous outer iterations, the resulting
Gram matrices preserve a clear concentration around the diagonal.
This behavior is qualitatively consistent with the moment-based
structure derived in Section~\ref{sec:cheb-gram-structure} for an
unpreconditioned raw Chebyshev block and indicates that its structural
features remain relevant for interpreting the updated blocks
generated by the complete algorithm.

\begin{figure}[!htbp]
\centering
\includegraphics[width=0.7\textwidth]{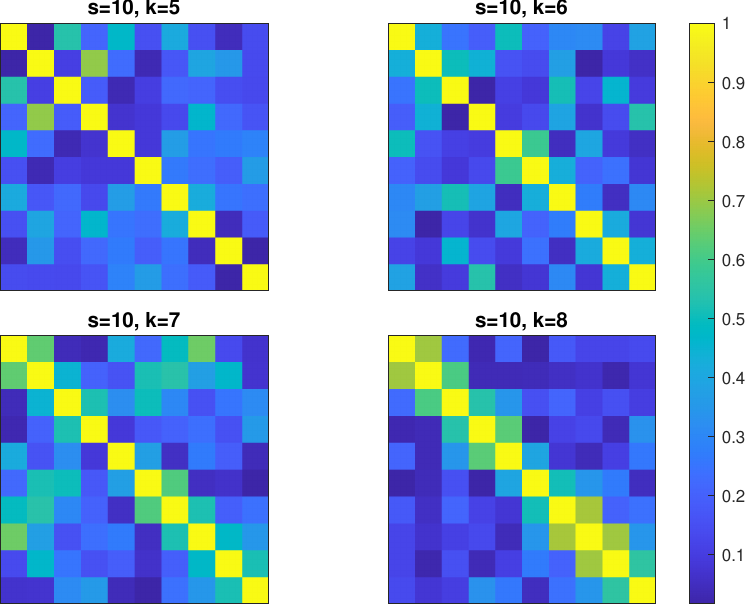}
\caption{Absolute values of representative diagonally normalized Gram
matrices with $s=10$.
The panels correspond to outer iterations $k=5,\ldots,8$ of PCG--S.
Problem: 27--point Poisson equation with
$5.12\times10^8$ degrees of freedom on $64$ GPUs.}
\label{fig:gram_patterns}
\end{figure}

At each outer iteration $k$, we measure the relative
residuals of the reduced systems for $\alpha_k$ and $\beta_k$ as
\[
\eta_{\alpha,k}
:=
\frac{\|Q_k^\top r^{(k)}-W_k\alpha_k\|_2}
     {\|Q_k^\top r^{(k)}\|_2},
\qquad
\eta_{\beta,k}
:=
\frac{\|-Q_k^\top AZ_{k+1}-W_k\beta_k\|_F}
     {\|Q_k^\top AZ_{k+1}\|_F}.
\]
The Frobenius norm is used for the $\beta_k$ system because it has
multiple right-hand sides. 

\begin{figure}[!htbp]
\centering
\includegraphics[width=0.6\textwidth]{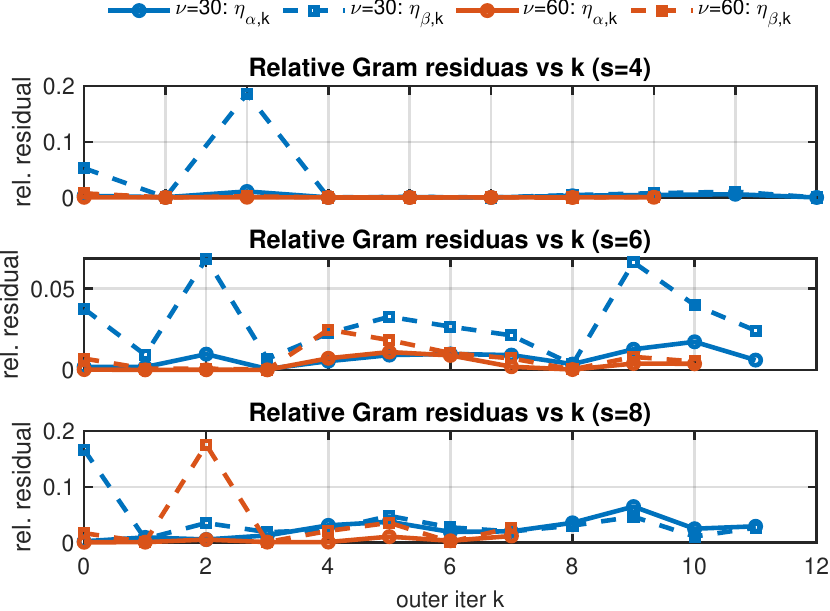}
\caption{Relative residuals $\eta_{\alpha,k}$ and
$\eta_{\beta,k}$ of the two reduced Gram systems versus the outer
iteration index $k$, for $\nu\in\{30,60\}$ FGS sweeps and step sizes
$s\in\{4,6,8\}$. Problem: 27--point Poisson equation with
$5.12\times10^8$ degrees of freedom on $64$ GPUs.}
\label{fig:resestimate_empirical}
\end{figure}

Figure~\ref{fig:resestimate_empirical}
reports these indicators versus the outer iteration index $k$ for
different numbers of FGS sweeps. For most
outer iterations, the residuals remain at the level of a few percent
or below. Larger transient values occur primarily for the
multiple-right-hand-side system defining $\beta_k$, with the largest
observed values remaining below approximately $2\times10^{-1}$.

Increasing the number of FGS sweeps from $\nu=30$ to $\nu=60$
generally improves the accuracy of the reduced solves, although the
reduction is not uniform at every outer iteration. This
nonmonotonicity is not unexpected, since each outer iteration involves
a different Gram matrix and different right-hand sides. In
particular, the curves should not be interpreted as the convergence
history of a single fixed reduced system.
However, no systematic growth of the reduced-system residuals is observed as
the outer iteration proceeds or as the step size increases over the
tested range. Together with the outer convergence histories in
Figure~\ref{fig:pcgsvsfgs_empirical}, these results show that the
accuracy delivered by a fixed, moderate number of FGS sweeps is
sufficient to preserve the observed PCG--S convergence behavior in
the configurations considered.

Figure~\ref{fig:pcgsvsfgs_empirical} reports the outer relative
residual histories. 
To obtain a meaningful convergence comparison, the PCG and PCG--S
residual histories are aligned by equivalent Krylov progress. More
precisely, the residual reported for PCG at the PCG--S outer iteration
$k$ is the residual obtained after $ks$ classical PCG iterations,
where $s$ is the block size of the corresponding PCG--S run. 

For the tested configurations with $\nu\geq30$, the convergence curves
obtained with FGS are nearly indistinguishable from those obtained
with the Cholesky-based Gram solve and closely follow the corresponding
classical PCG histories sampled after every $s$ iterations. This
provides empirical evidence that, within the tested range, a fixed
number of FGS sweeps is sufficient to avoid an appreciable degradation
of the outer convergence. The observed behavior is also compatible
with the practical inexact-solve guideline expressed by
condition~\eqref{cond:inexactKrylov2}, which, as noted above, is used
here as a heuristic rather than as a rigorous sufficient condition.

No numerical instability is observed over the tested range of step
sizes. The results indicate that the reduced Gram systems remain
sufficiently regular for the prescribed number of FGS sweeps to
provide adequate accuracy at the moderate values of $s$ considered
here. This behavior is qualitatively consistent with the structural
interpretation of the Chebyshev Gram matrices developed in
Section~\ref{sec:cheb-gram-structure}, but it does not constitute a
uniform condition-number or stability bound for all Gram matrices
generated by PCG--S.

\begin{figure}[!htbp]
\centering
\includegraphics[width=0.6\textwidth]{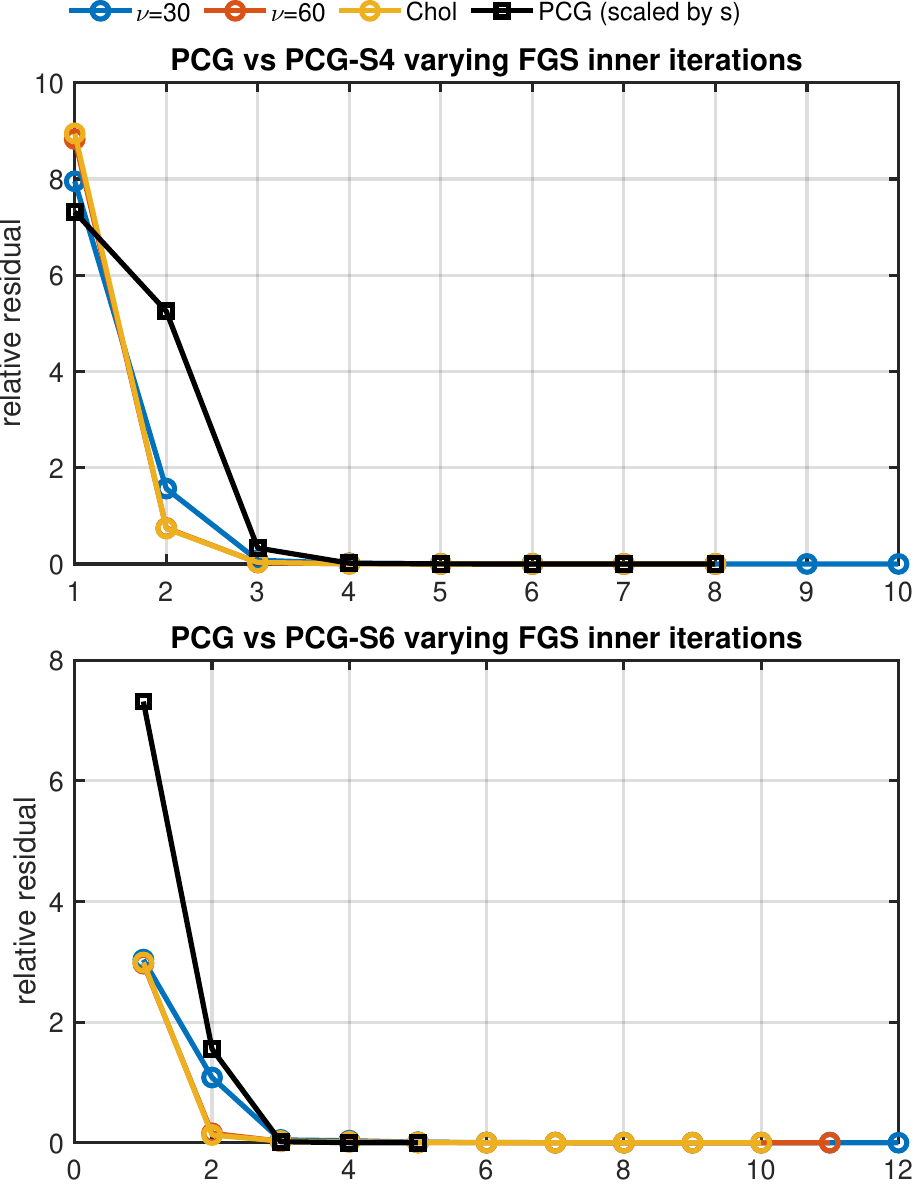}
\caption{Relative residual histories for PCG--S with two FGS sweep
counts $\nu$ and step sizes $s\in\{4,6\}$, compared with classical
PCG and PCG--S using a Cholesky-based Gram solve. At PCG--S outer
iteration $k$, the corresponding PCG residual is sampled after $ks$
classical iterations, so that the curves are aligned by equivalent
Krylov progress. Problem: 27--point Poisson equation with
$5.12\times10^8$ degrees of freedom on $64$ GPUs.}
\label{fig:pcgsvsfgs_empirical}
\end{figure}

\section{Practical Implementation}
\label{sec:impl}

The PCG-S method described in Algorithms~\ref{alg:pcgs} and~\ref{alg:MPK} has been implemented in the
\texttt{BootCMatchGX} framework, a distributed NVIDIA multi--GPU software environment for
large--scale sparse linear algebra. The implementation builds upon the existing \texttt{BootCMatchGX} infrastructure, using its CSR matrix representation for distributed sparse matrices and combining MPI-based domain decomposition and inter-process communication across nodes with CUDA-based acceleration on each GPU.

The Matrix Power Kernel (Algorithm~\ref{alg:MPK}), responsible for generating the Chebyshev
block basis, relies on a distributed SpMV
implementing explicit com\-munica\-tion--com\-pu\-ta\-tion overlap.
Given the row--wise domain decomposition, halo exchanges are required at each
SpMV application. In our implementation, non--blocking MPI communications are
initiated prior to launching the local GPU kernel. While boundary data are
being transferred, the interior rows are processed on the device. After
communication completion, boundary contributions are finalized.
This strategy follows the standard overlap paradigm for distributed SpMV and
is particularly effective in strong--scaling regimes where communication latency
dominates arithmetic cost. Within each MPK cycle, SpMV and
preconditioner applications alternate.

The current implementation provides an Algebraic MultiGrid (AMG) preconditioner
based on aggregation of unknowns, driven by the so-called {\em Compatible Weighted Matching} (see~\cite{Dambra2018,ieee2023} for details), already available in
\texttt{Boot\-C\-MatchGX}. However, the solver architecture exposes a generic
preconditioner interface. Any future preconditioner that conforms to this
interface can be seamlessly integrated into the PCG-S framework without
modifying Algorithms~\ref{alg:pcgs} or~\ref{alg:MPK}.
Thus, the $s$-step formulation remains agnostic to the specific choice of
preconditioner, and the communication-reduction strategy is orthogonal to
preconditioning design.

All block linear algebra operations arising in PCG--S are formulated in terms of small dense matrix kernels to maximize arithmetic intensity on GPUs.
In particular, local operations are performed using the GEMM and GEMV routines from the cuBLAS library. These kernels are used for the block operations in Algorithm~\ref{alg:pcgs}, including updates of the approximate solution and residual (lines 7--8), construction of the local Gram contributions and
right-hand sides (lines 5 and 13), and block recurrences (line 15).
Recasting vector recurrences as dense block operations enables efficient transparent
utilization of current tensor cores and high--throughput GPU pipelines, significantly
increasing computational efficiency relative to classical scalar CG updates.
All such operations are executed locally on the device, avoiding unnecessary
host-device transfers and increasing effective flop utilization.

The reduced Gram matrix $W \in \mathbb{R}^{s \times s}$ is assembled from products between local vector blocks.
Each process computes its local contribution via cuBLAS GEMM, and an MPI all--reduce combines these contributions to obtain the global matrix $W$, replicated on all ranks.
The FGS iterations used to approximate the solutions of the reduced
systems at lines~6 and~14 are performed redundantly on each CPU.
This design avoids additional communication and is justified by the
small dimension of the Gram systems ($s\leq 10$ in the configurations
considered) and by the negligible cost of the replicated FGS solves
relative to that of the global reductions.

After completion of the FGS iteration, the coefficient vectors and matrices
are transferred back to the GPU and used for block updates.
Overall, the implementation is explicitly designed to operate in the regime
where latency dominates arithmetic cost, thereby exploiting the asymptotic
advantages predicted by the strong and weak scaling analyses of the next Section.

\subsection{Performance Model}
\label{sec:costperiter}

In large--scale GPU--accelerated environments, the performance of Krylov subspace methods is increasingly dominated by communication costs rather than arithmetic throughput. A quantitative performance model is therefore essential to characterize the trade--off between reduced global synchronization and increased local computation introduced by the $s$--step formulation. In this section, we derive a latency-based model that captures these effects and predicts the scalability regime in which the proposed method becomes advantageous over classical PCG. For simplicity, the model omits the global reductions used solely to
evaluate the stopping criterion in both methods. If these reductions
are performed separately at every classical or outer iteration, as in our implementation, their
inclusion would further favor PCG--S, making the present
communication-savings estimate conservative.

Let \( W_{\mathrm{SpMV}} \) and \( W_{\mathrm{prec}} \) denote the cost of one parallel sparse matrix--vector
multiplication and one preconditioner application, respectively.
Let \( W_{\mathrm{axpy}} \), \( W_{\mathrm{dot}} \), and \( W_{\mathrm{allreduce}} \) be the costs of one vector update
(axpy), one dot product, and one global reduction (required by parallel dot products), respectively.

Then, the cost of a single iteration of the PCG method can be approximated as
\[
W_{\mathrm{PCGiter}} \approx
W_{\mathrm{SpMV}} + W_{\mathrm{prec}} + 3 W_{\mathrm{axpy}} + 2 W_{\mathrm{dot}} + 2 W_{\mathrm{allreduce}}.
\]

For the PCG-S method with a Chebyshev--based MPK kernel and
\(\nu\) FGS sweeps on the Gram systems,
the cost of one outer iteration is
\begin{align*}
W_{\mathrm{PCGSiter}}(\nu)
&\approx
s\bigl(W_{\mathrm{SpMV}}+2W_{\mathrm{axpy}}\bigr)
+sW_{\mathrm{prec}}
\\
&\quad
+\left(\frac{s(s+1)}{2}+s\right)W_{\mathrm{dot}}
+s^2W_{\mathrm{dot}}
+2W_{\mathrm{allreduce}}
\\
&\quad
+2W_{\mathrm{vecupdates}}
+2W_{\mathrm{matupdates}}
+W_{\mathrm{FGS}}(s,\nu),
\end{align*}
where \( W_{\mathrm{matupdates}} \) accounts for local rectangular matrix updates (line~15 in Algorithm~\ref{alg:pcgs}), $W_{\mathrm{vecupdates}}$ denotes the cost of one block
matrix--vector update, accounting for the solution and residual
updates (lines~7--8 of Algorithm~\ref{alg:pcgs}), and $W_{\mathrm{FGS}}(s,\nu)$ denotes the arithmetic work required
to apply $\nu$ FGS sweeps to both reduced systems. 

By grouping equivalent terms, we obtain
\begin{align*}
W_{\mathrm{PCGSiter}}(\nu)
&\approx
sW_{\mathrm{SpMV}}
+sW_{\mathrm{prec}}
+2sW_{\mathrm{axpy}}
+\frac{3}{2}(s^2+s)W_{\mathrm{dot}}
\\
&\quad
+2W_{\mathrm{allreduce}}
+2W_{\mathrm{vecupdates}}
+2W_{\mathrm{matupdates}}
+W_{\mathrm{FGS}}(s,\nu).
\end{align*}

If we explicitly account for the floating-point work associated with
the local vector and matrix operations, which do not require
inter-process communication, the costs of $s$ PCG iterations and one
PCG--S outer iteration, for a global problem of dimension $n$, can be
written as
\begin{align*}
sW_{\mathrm{PCGiter}}
&\approx
sW_{\mathrm{SpMV}}
+sW_{\mathrm{prec}}
+2sW_{\mathrm{allreduce}}
+8sn\ \mathrm{flops},
\\[4pt]
W_{\mathrm{PCGSiter}}(\nu)
&\approx
sW_{\mathrm{SpMV}}
+sW_{\mathrm{prec}}
+2W_{\mathrm{allreduce}}
\\
&\quad
+\left[
8sn
+\frac{3}{2}(s^2+s)n
+2(s^2+s)n
\right]\mathrm{flops}
+W_{\mathrm{FGS}}(s,\nu)
\\
&=
sW_{\mathrm{SpMV}}
+sW_{\mathrm{prec}}
+2W_{\mathrm{allreduce}}
\\
&\quad
+\left[
8sn+\frac{7}{2}(s^2+s)n
\right]\mathrm{flops}
+W_{\mathrm{FGS}}(s,\nu).
\end{align*}

Note that in our parallel implementation, FGS sweeps are replicated across all parallel tasks.
Therefore, their cost \( W_{\mathrm{FGS}}(s,\nu) \) represents a \emph{local replicated overhead}.
In contrast, the cost of parallel operations, i.e., vector and matrix updates which are executed concurrently by all tasks, increases quadratically with \(s\).

Let \( t_{\mathrm{flop}} \) denote the execution time of one double--precision flop on a single processor,
and let us adopt the standard latency--bandwidth communication model. In the
regime considered here, the global reductions involve relatively
small messages, while their cost is dominated by collective
synchronization latency. We therefore neglect the lower-order
bandwidth contribution and approximate the time of one MPI
all-reduce among $P$ processes as
\[
T_{\mathrm{allreduce}}
\approx
\alpha_{\mathrm{lat}}\log_2P.
\]
The iteration times on P processes can be expressed as
\begin{align*}
T_{s,\mathrm{PCGiter}}
&\approx
sT_{\mathrm{SpMV}}
+sT_{\mathrm{prec}}
+2s\alpha_{\mathrm{lat}}\log_2P
+\frac{8sn}{P}\,t_{\mathrm{flop}},
\\[4pt]
T_{\mathrm{PCGSiter}}(\nu)
&\approx
sT_{\mathrm{SpMV}}
+sT_{\mathrm{prec}}
+2\alpha_{\mathrm{lat}}\log_2P
\\
&\quad
+\frac{
8sn+\tfrac{7}{2}(s^2+s)n
}{P}\,t_{\mathrm{flop}}
+T_{\mathrm{FGS}}(s,\nu),
\end{align*}

where $T_{\mathrm{SpMV}}$ and $T_{\mathrm{prec}}$ are the execution times for one parallel SpMV and for one parallel application of a preconditioner, respectively, and $T_{\mathrm{FGS}}(s,\nu)$ is the cost for Gram solves.

\paragraph{Interpretation of the difference.}

\begin{align}
\Delta(P,s,\nu)
&=
T_{\mathrm{PCGSiter}}(\nu)
-
T_{s,\mathrm{PCGiter}}
\notag
\\
&=
2\alpha_{\mathrm{lat}}(1-s)\log_2P
+
\frac{7}{2}(s^2+s)\frac{n}{P}\,t_{\mathrm{flop}}
+
T_{\mathrm{FGS}}(s,\nu)
\label{eq:delta}
\end{align}
represents the difference between the time required for one outer iteration of the
PCG-S algorithm and \(s\) consecutive classical PCG iterations.
A negative value of \(\Delta\) indicates that one iteration of PCG-S is faster than $s$ iterations of the classical PCG.

\subparagraph{Communication term.}
The first component,
\[
2\alpha_{\mathrm{lat}} \,(1 - s) \log_2 P,
\]
quantifies the variation in global communication cost.
In the classic PCG method, each iteration involves two global reductions,
whereas PCG-S performs only two reductions per outer iteration, corresponding to
$s$ PCG iterations.
For \(s>1\), this term becomes \emph{negative}, indicating that PCG-S reduces the number of global synchronizations by a factor of $s$.
Consequently, this component favors PCG-S in large-scale parallel systems where latency dominates the overall execution time.

\subparagraph{Computational term.}
The second component,
\[
\frac{7}{2}(s^2+s)\frac{n}{P}\,t_{\mathrm{flop}}
\]
represents the difference in distributed floating-point workload.
It is positive for every $s\geq1$ and grows quadratically with $s$.
In the communication-avoiding regime $s>1$, it therefore represents
an additional local arithmetic cost relative to $s$ classical PCG
iterations. This cost arises primarily from the extra inner products
required to form the reduced Gram systems and from the block
recurrences within the $s$--step Krylov basis. Consequently, this
term increases $\Delta$ and penalizes PCG--S as $s$ grows.

\subparagraph{Gram solve overhead.}
The last component, denoted by $T_{\mathrm{FGS}}(s,\nu)$, captures
the overhead of applying $\nu$ FGS sweeps to both reduced Gram
systems. Since the Gram system
for $\alpha_k$ has one right-hand side whereas the Gram system for
$\beta_k$ has $s$ right-hand sides, 
for the parametric evaluation of the model, we approximate the FGS
time by its leading-order arithmetic cost,
\[
T_{\mathrm{FGS}}(s,\nu)
\approx
c_{\mathrm{FGS}}\nu s^3t_{\mathrm{flop}},
\]
where $c_{\mathrm{FGS}}$ is an implementation-dependent constant
that accounts for the detailed FGS operation count and for the
difference between the effective execution rates of the local
distributed kernels and the replicated Gram solves. In the
parametric results below, we set $c_{\mathrm{FGS}}=1$ to obtain a
simple leading-order model. This contribution is positive,
grows linearly with $\nu$ and cubically with $s$ in leading order,
and represents a replicated local overhead in the total runtime of
$s$--step PCG. 

\medskip
In summary, \(\Delta(P,s,\nu)\) highlights the trade--off between
\emph{reduced communication latency} (first term, beneficial)
and \emph{increased local computation} (second and third terms, detrimental).
As expected, PCG-S can become advantageous when
communication latency dominates over local computation, i.e., for large \(P\)
and high--latency parallel architectures.

\paragraph{Parametric interpretation under strong scaling.}

In the strong--scaling regime, the global problem size \(n\) is fixed while the number of processes \(P\) increases, so that each process handles a smaller local workload \(c(P)=n/P\).
Figure~\ref{fig:Strong_Delta_vsP_nu30} shows the behavior of
\(\Delta_{\mathrm{strong}}(P,s,\nu)\) for $\nu=30$, $n=500^3$.

As $P$ increases from $32$ to $512$, the distributed arithmetic term
in $\Delta_{\mathrm{strong}}$ decreases as $1/P$, whereas the
replicated FGS overhead is independent of $P$ in the present model.
Conversely, the communication term,
\[
2\alpha_{\mathrm{lat}}\log_2 P\,(1 - s),
\]
grows logarithmically with \(P\), since the cost of global reductions increases with the depth of the communication tree.
Because this term is \emph{negative} for \(s>1\), it represents a latency \emph{reduction} that becomes increasingly dominant at large process counts.

The combined effect produces a monotonic decrease in \(\Delta_{\mathrm{strong}}\) as \(P\) grows,
eventually leading to a crossover point where \(\Delta_{\mathrm{strong}}<0\),
and the PCG-S algorithm becomes more efficient than the classical PCG.
This transition occurs at larger process counts for larger \(s\),
as the enhanced communication savings of PCG-S increasingly compensate for its greater local computational cost.

Overall, the strong--scaling results confirm that for small \(P\) or small steps \(s\), the classical PCG remains competitive;
however, at large \(P\) and high $\alpha_{\mathrm{lat}}/t_{\mathrm{flop}}$ ratios,
the reduced synchronization cost suggests that PCG-S becomes increasingly advantageous.
\begin{figure}[!htbp]
  \centering
  \includegraphics[width=0.7\linewidth]{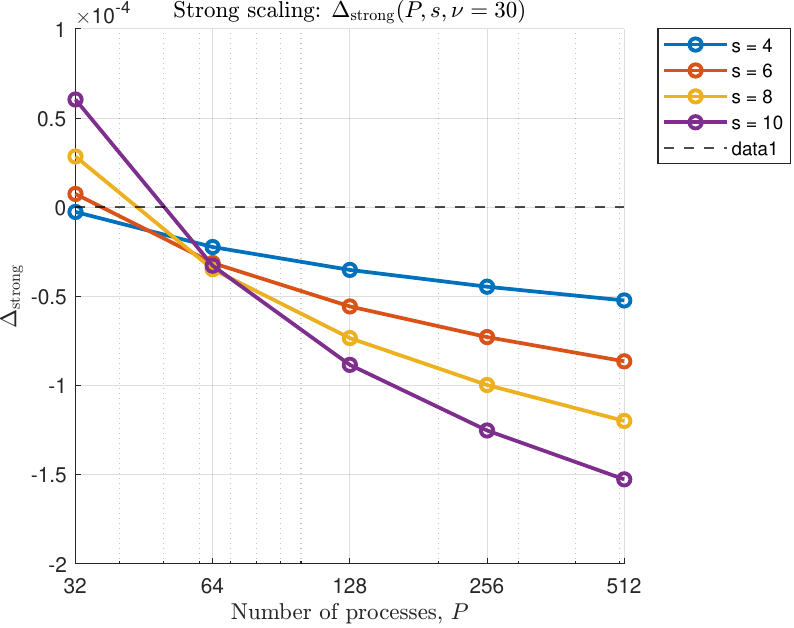}
  \caption{Strong--scaling behavior of
  \(\Delta_{\mathrm{strong}}(P,s,\nu=30)\) as a function of the number of processes \(P\)
  for several step sizes \(s\), with fixed \(n=500^3\). The dashed line marks \(\Delta_{\mathrm{strong}}=0\).}
  \label{fig:Strong_Delta_vsP_nu30}
\end{figure}

\paragraph{Parametric interpretation under weak scaling.}

Under weak scaling, where \(n = cP\) and each processor handles a fixed local problem size \(c\),
the difference in \eqref{eq:delta} becomes
\[
\Delta_{\mathrm{weak}}(P,s,\nu)
=
2\alpha_{\mathrm{lat}}(1-s)\log_2 P
+
\frac{7}{2}c(s^2+s)\,t_{\mathrm{flop}}
+
\nu s^3\,t_{\mathrm{flop}}.
\]
The sign of \(\Delta_{\mathrm{weak}}\) is determined by the competition between:

\begin{enumerate}
\item the \emph{communication gain} of the PCG-S method,
given by $2\alpha_{\mathrm{lat}}\,(1 - s)\log_2 P < 0$ for \(s > 1\),
which represents the latency reduction from fewer global synchronizations; and
\item the \emph{computational penalty} of PCG-S,
\(\tfrac{7}{2}c(s^2+s)\,t_{\mathrm{flop}}+ \nu s^3\,t_{\mathrm{flop}}\),
arising from the additional local arithmetic operations and sequential overhead.
\end{enumerate}

PCG-S becomes asymptotically advantageous
(\(\Delta_{\mathrm{weak}} < 0\))
if and only if the communication savings dominate the extra computation, that is,
\[
2\alpha_{\mathrm{lat}} \,(s - 1) \log_2 P
\;>\;
\frac{7}{2}c(s^2+s)\,t_{\mathrm{flop}}
+ \nu s^3t_{\mathrm{flop}}.
\]
Solving for $P$ yields the critical process count beyond which
PCG--S outperforms $s$ classical PCG iterations:
\begin{equation}
\label{eq:Pcrit}
\log_2 P_{\mathrm{crit}}(s,\nu)
=
\frac{
t_{\mathrm{flop}}
\left[
\frac{7}{2}c(s^2+s)
+
\nu s^3
\right]
}{
2\alpha_{\mathrm{lat}}(s-1)
},
\qquad s>1.
\end{equation}
\begin{table}[htbp]
\centering
\begin{tabular}{@{}ccc@{}}
\toprule
$s$
&
$\log_2 P_{\mathrm{crit}}(s,30)$
&
$P_{\mathrm{crit}}(s,30)$
\\
\midrule
$2$  & $8.40$  & $338$    \\
$3$  & $8.40$  & $338$    \\
$4$  & $9.33$  & $646$    \\
$5$  & $10.50$ & $1449$   \\
$6$  & $11.76$ & $3469$   \\
$7$  & $13.07$ & $8580$   \\
$8$  & $14.40$ & $21621$  \\
$9$  & $15.75$ & $55115$  \\
$10$ & $17.11$ & $141582$ \\
\bottomrule
\end{tabular}
\caption{Critical process count $P_{\mathrm{crit}}(s,30)$ under weak
scaling for $c=200^3$, $\alpha_{\mathrm{lat}}=10^{-6}\,\mathrm{s}$,
$t_{\mathrm{flop}}=10^{-13}\,\mathrm{s}$, and
$c_{\mathrm{FGS}}=1$. The process counts are rounded upward to the
nearest integer.}
\label{tab:pcrit}
\end{table}

The values in Table~\ref{tab:pcrit} represent the minimum number of processes
$P_{\mathrm{crit}}(s,30)$ required for the PCG-S method
to become faster than the classical PCG in the weak--scaling regime,
under the model's assumptions on $\alpha_{\mathrm{lat}}$ and $t_{\mathrm{flop}}$.
For $P > P_{\mathrm{crit}}$, the reduction in global communication
outweighs the additional local floating-point work,
leading to $\Delta < 0$ (PCG-S becomes faster).
As noted in Section~\ref{strong}, the model underestimates communication costs
relative to measured data; the actual crossover process counts are therefore
likely lower than (more favorable than) those in the table.

As $s$ increases, $\log_2 P_{\mathrm{crit}}$ grows rapidly, causing
the critical process count $P_{\mathrm{crit}}$ to increase
approximately exponentially over the range considered. For small
step sizes ($s=2$--$3$), the predicted crossover occurs at a few
hundred processes. For $s=4$, it remains below one thousand
processes, whereas for larger values of $s$ it shifts from thousands
to more than $10^5$ processes. Thus, as $s$ grows, the larger
communication savings become beneficial only at increasingly large
parallel scales because they must compensate for the additional
local arithmetic work.

In summary, the parametric model predicts that PCG--S can become
advantageous in high-latency, large-scale environments. At lower
process counts, or when the additional block arithmetic dominates
the communication savings, classical PCG remains competitive.

As also illustrated in Figure~\ref{fig:delta_vsP}, the communication
term becomes increasingly influential as the process count grows.
Consequently, a large latency-to-compute ratio
$\alpha_{\mathrm{lat}}/t_{\mathrm{flop}}$ favors PCG--S and lowers
the process count at which the predicted crossover occurs.
%
\begin{figure}[!htbp]
  \centering
  \includegraphics[width=0.7\textwidth]{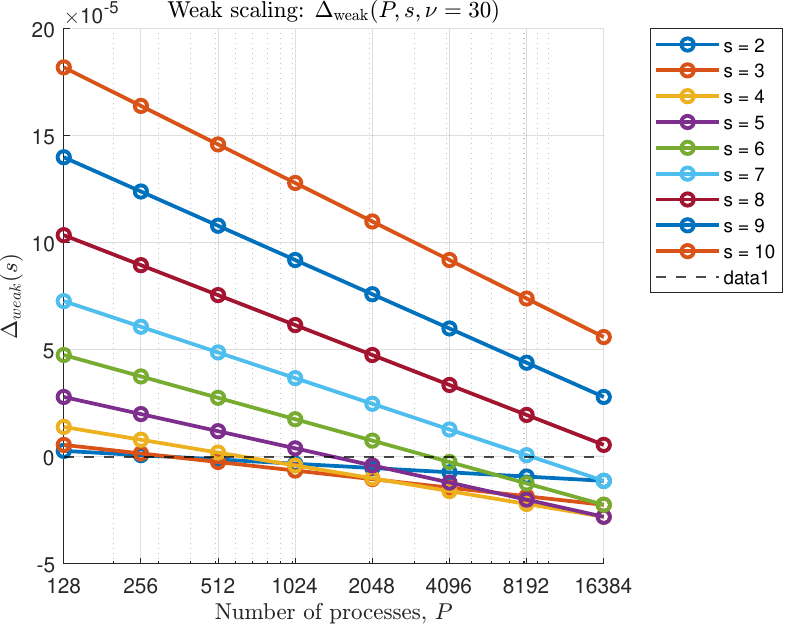}
  \caption{Weak--scaling behavior of \(\Delta_{\mathrm{weak}}(P,s,\nu=30)\) versus the number of processes \(P\),
  for several step sizes \(s\), with scaling factor \(c=200^3\). The dashed line marks \(\Delta_{\mathrm{weak}}=0\).
 }
  \label{fig:delta_vsP}
\end{figure}

\section{Related Work}
\label{sec:related}

Reducing global synchronization costs is a central challenge for Krylov subspace methods on modern large-scale architectures.

One class of approaches reduces the frequency of global synchronization. The $s$-step CG method, originally introduced by Chronopoulos and Gear~\cite{ChronGear1989,ChronGear1989p}, groups several consecutive Krylov iterations into a single outer iteration, reducing the number of global reductions by a factor of~$s$. This idea was later formalized within the broader framework of communication-avoiding (CA) Krylov methods by Hoemmen and Demmel~\cite{Hoemmen2010,Demmel2013CAKrylov}, where classical Krylov recurrences are reorganized so as to minimize synchronization while preserving the underlying Krylov sequence in exact arithmetic. A related but conceptually distinct strategy is followed by enlarged Krylov subspace methods~\cite{Grigori_enlarged1,Grigori_enlarged2}, which generate multiple search directions per iteration by expanding the approximation space, often through domain decomposition or multiple starting vectors.

A well--known limitation of early $s$--step and CA-CG formulations is their sensitivity to finite-precision effects. When monomial bases are used, the conditioning of the associated Gram matrices grows rapidly with~$s$, which restricts practical step sizes. Moreover, the Chronopoulos--Gear variant reduces the number of global reductions by introducing auxiliary recurrence relations for dot products and Gram quantities; while effective in reducing synchronization, these additional recurrences increase algebraic coupling among quantities and may amplify rounding errors. Several works therefore investigated strategies to improve the numerical robustness of reduced-synchronization Krylov methods. Hoemmen~\cite{Hoemmen2010} discussed the use of better--conditioned polynomial bases, such as Chebyshev or Newton polynomials, while Carson and collaborators studied adaptive $s$--step strategies and deflation or augmentation techniques for CA-CG~\cite{CarsonAdaptive2018,CarsonKnightDemmel2014}. Mixed-precision variants of $s$--step Lanczos and CG have also been analyzed~\cite{CarsonGergelitsYamazaki2022}, where higher precision is used in the Gram matrix formation to mitigate conditioning issues. Although effective numerically, approaches relying on quadruple precision are difficult to reconcile with the performance characteristics of modern accelerator--based architectures, where such arithmetic is typically emulated in software and introduces substantial computational overhead.

A complementary line of research aims instead to hide communication latency through pipelining. Pipelined CG methods reorganize the classical recurrences so that global reductions can be overlapped with SpMV and preconditioner applications~\cite{GhyselsVanroose2014}. While these methods significantly improve strong scaling, their multi--term recurrences may amplify rounding errors and reduce the maximal attainable accuracy in finite precision~\cite{CarsonPipelined2018}. Stabilized pipelined variants have therefore been proposed to mitigate error propagation while preserving communication--hiding properties~\cite{Cools2017Stabilized,Cornelis2019StableRecurrences}.

Beyond algorithmic formulations, several works have investigated scalable implementations of these ideas on distributed--memory systems. Pipelined CG variants for large--scale architectures overlap global reductions with SpMV and preconditioner applications, thereby reducing idle time caused by synchronization~\cite{tiwari2022pipepcg}. Other implementations combine pipelining with $s$--step formulations to simultaneously reduce and hide communication costs, overlapping the construction of block--Krylov bases with the evaluation of global reductions~\cite{Tiwari2021}. More general frameworks integrating pipelining with communication--avoiding $s$--step methods have also been proposed to improve scalability on large parallel systems~\cite{MG2025,mayer2026pspcg}.
The implementations considered in these latter studies use an
all-MPI execution model on distributed-memory CPU systems. Although
they provide important algorithmic reference points, their published
performance results do not constitute a like-for-like baseline for
the present multi-GPU implementation, because the relative costs of
local kernels, preconditioner applications, data movement, and global
reductions differ substantially between the two architectures.
To the best of our knowledge, no publicly available fully distributed
multi-GPU implementation of preconditioned $s$--step CG has previously
been assessed at the scale considered here.

In this respect, the present work complements the existing literature
by combining a numerically stabilized Chebyshev--based formulation, an
inexact FGS Gram solve, and an implementation explicitly designed for
modern multi-GPU architectures.

We adopt an $s$--step formulation that remains close to the classical PCG algorithm while avoiding auxiliary recurrence--based updates of the Gram matrix and long multi--term recurrences typical of pipelined approaches. The reduced systems are formed explicitly at each outer iteration, limiting error propagation while preserving the algebraic structure of classical PCG. Our approach uses a Chebyshev basis to mitigate the
basis-induced ill-conditioning of the reduced Gram matrices relative
to the usual monomial representation. The reduced systems are solved
approximately by FGS in standard double precision. The structural
analysis in Section~\ref{sec:erroranalysis} provides a qualitative
interpretation of the resulting Gram matrices, while the accuracy and
stability of the inexact solves for the finite-dimensional systems
considered here are assessed experimentally.
The theory of inexact Krylov methods~\cite{vandenEshofSleijpen2004,bouras2005}
motivates the control of the perturbations introduced by approximate
inner solves. In the present setting, the relative Gram-residual
criterion is used as a practical guideline rather than as a complete
problem-independent convergence guarantee.
Our approach is inspired by recent work on low-synchronization projected solves in GMRES~\cite{Thomas2024IGSGMRES}, where iterated Gauss--Seidel sweeps were shown to provide an effective alternative to dense factorizations in communication--avoiding formulations.

\section{Performance Results at Large Scale}
\label{sec:results}

We report results for our multi--GPU implementation of preconditioned $s$--step CG available in \texttt{BootCMatchGX}. Experiments were conducted on the Leonardo supercomputer (ranked 10th in the November 2025 Top500 list; BullSequana XH2000, Xeon Platinum 8358 32C 2.6 GHz, NVIDIA A100 SXM4 64 GB, quad-rail NVIDIA HDR100 InfiniBand) and on MareNostrum~5 at the Barcelona Supercomputing Center (accelerated partition ranked 14th in the November 2025 Top500 list; BullSequana XH3000, Intel Xeon Platinum 8460Y+ 32C 2.3 GHz, NVIDIA H100 SXM 64 GB, InfiniBand NDR), depending on resource availability.

The choice of linear system sizes and step sizes $s$ is guided by the theoretical model of Section~\ref{sec:costperiter} and by the maximum scale currently accessible on these systems. Further validation of the asymptotic predictions, including larger step sizes, will require benchmark--scale allocations planned for future work.

Strong and weak scalability are evaluated using linear systems arising from the standard 3D Poisson benchmark: matrix $A$ corresponds to the 27--point finite--difference discretization of the Poisson equation on the unit cube (as in HPCG), with Dirichlet boundary conditions and unit right--hand side. Matrices are row--partitioned across GPUs, mapping the 3D domain onto a 3D MPI process grid with one GPU per task.

\subsection{Strong Scalability}
\label{strong}

In this section we report strong--scaling results obtained on MareNostrum~5. The Gram systems are solved using $\nu = 30$ sweeps of the FGS method. Experiments are performed on a problem of size $500^3$ DOFs, using between 32 and 256 GPUs.

Our primary goal is to compare the $s$-step CG method with the classical CG algorithm. To isolate the impact of global communication per iteration and assess the effect of the $s$--step formulation, we consider the unpreconditioned case and fix the number of (outer) iterations to $k_{\max} = 20$. In the un--preconditioned setting, the preconditioner is formally set to the identity matrix, $M = I$, so that $M^{-1}$ reduces to the identity operator and no preconditioning is applied within the MPK algorithm.
Therefore we use the label $CG--S$ in the corresponding figures.

\begin{figure}[!htbp]
\centering
\includegraphics[width=0.7\textwidth]{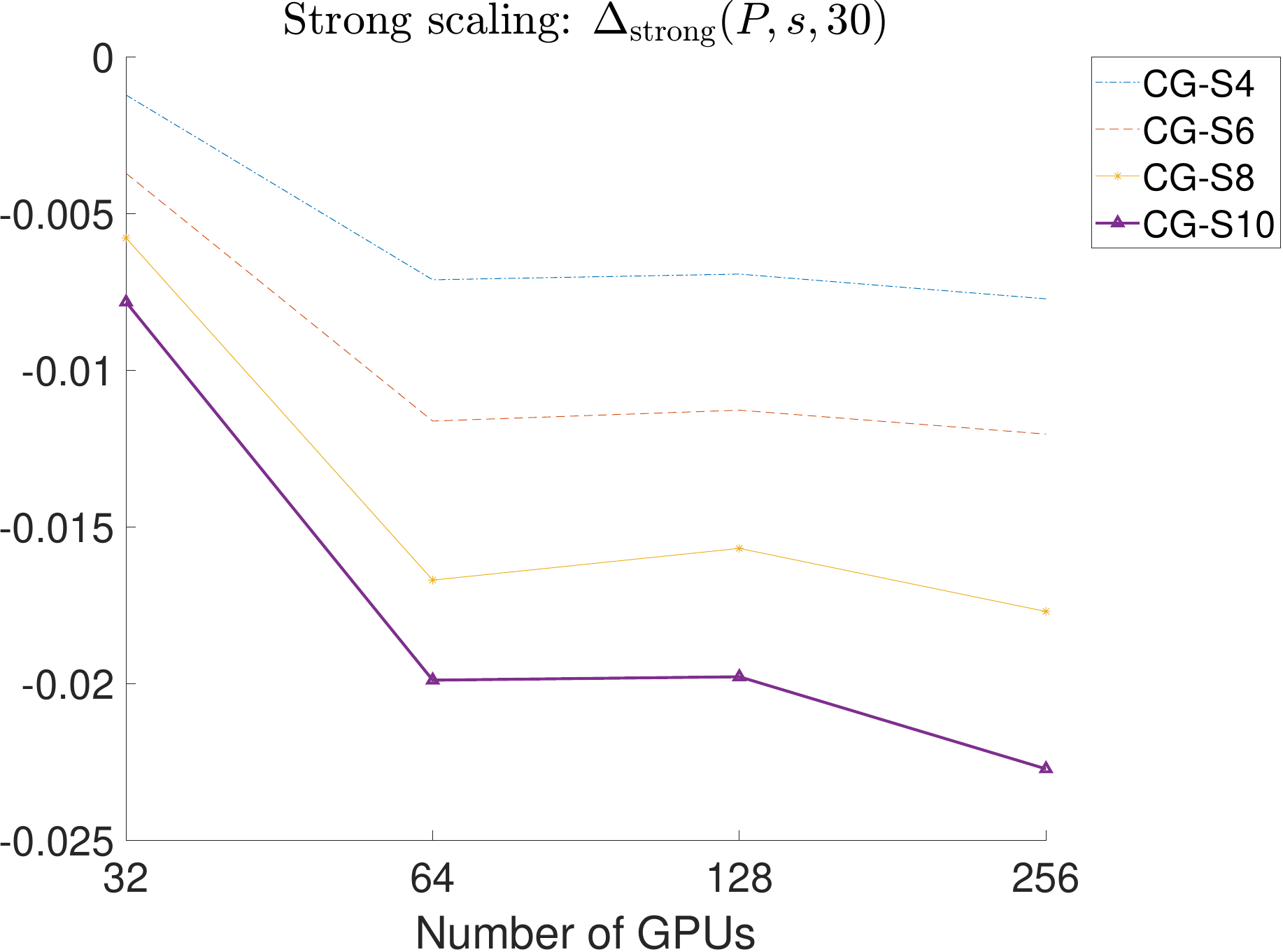}
\caption{Strong--scaling behavior of
  \(\Delta_{\mathrm{strong}}(P,s,\nu=30)\) with fixed \(n=500^3\) \label{fig:strong1}}
\end{figure}
\begin{figure}[!htbp]
\centering
\includegraphics[width=0.7\textwidth]{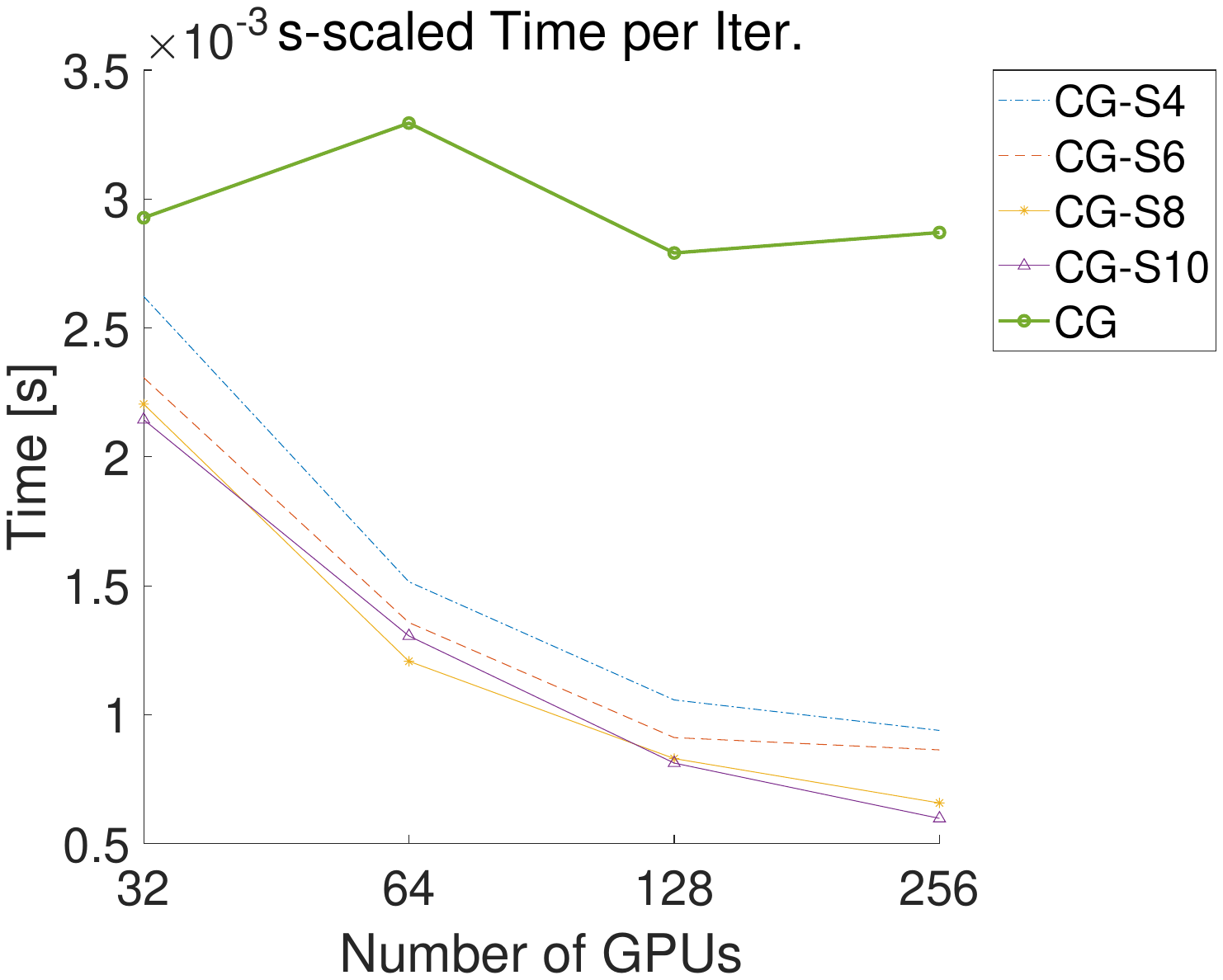}\\[0.2cm]
\includegraphics[width=0.7\textwidth]{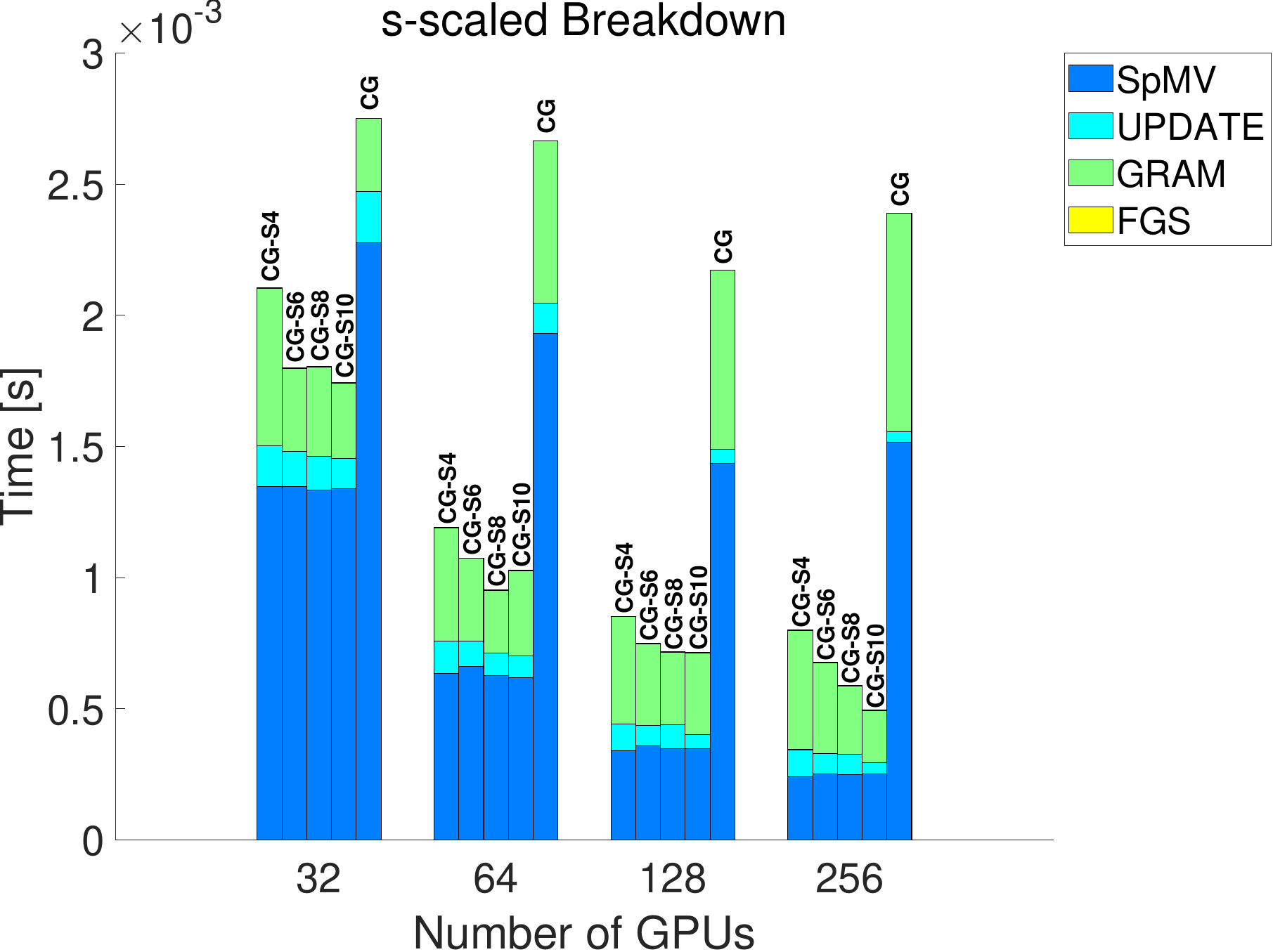}
\caption{Strong scalability: time per iteration scaled by step size s (top) and related breakdown (bottom). \label{fig:strong2}}
\end{figure}
Figure~\ref{fig:strong1} reports the strong--scaling indicator
$\Delta_{\mathrm{strong}}$ measured on MareNostrum~5 for the CG-S variants, with different values of $s$, as a function of the number of GPUs. For $P=32$, the cost model predicts positive values of $\Delta_{\mathrm{strong}}$ for $s \ge 8$, suggesting that the additional computational work of the $s$-step formulation should dominate at this scale. In contrast, the measured data show that $\Delta_{\mathrm{strong}}$ is already negative in this regime. Moreover, the magnitude of the measured negative difference is
approximately two orders of magnitude larger than predicted by the
model. Thus, the simplified model substantially underestimates the
performance advantage of PCG--S, possibly because it does not fully
capture the communication overhead or the different effective
execution rates of the local kernels. The measurements nevertheless
show that communication avoidance provides tangible benefits, even
at moderate scale, as $s$ increases.

In agreement with the model, as $P$ grows the magnitude of $\Delta_{\mathrm{strong}}$ increases with $s$, and the difference becomes more pronounced at larger scales. This trend highlights the increasing impact of global reductions as both $P$ and $s$ grow, confirming that communication-avoiding strategies are particularly effective in the large--scale regime.

Fixing the iteration count isolates system--level effects and shows that reducing the number of global reductions per iteration directly improves strong--scaling efficiency. When normalized by $s$, the per--iteration time of the $s$--step CG method decreases nearly monotonically with $P$ for larger $s$, indicating that the additional arithmetic work is effectively amortized by the reduction in communication latency.

The breakdown analysis in Figure~\ref{fig:strong2} further shows that, as $P$ increases, global reductions associated with dot products account for an increasingly large fraction of the runtime in classical CG. The $s$--step formulation mitigates this cost substantially. SpMV remains the dominant kernel, while vector operations and reductions are significantly reduced in the $s$-step setting. The FGS Gram solve is negligible in these tests, representing under $1\%$ of the per--iteration solve time even for the largest $s$ (Figure~\ref{fig:strong2}, bottom panel).

For completeness, Figure~\ref{fig:strong3} reports the speedup of CG and the CG-S variants. While CG exhibits limited strong scalability, the CG-S variants scale increasingly better as $s$ grows, consistently with the reduced frequency of global communications. 

\begin{figure}[!htbp]
\centering
\includegraphics[width=0.7\textwidth]{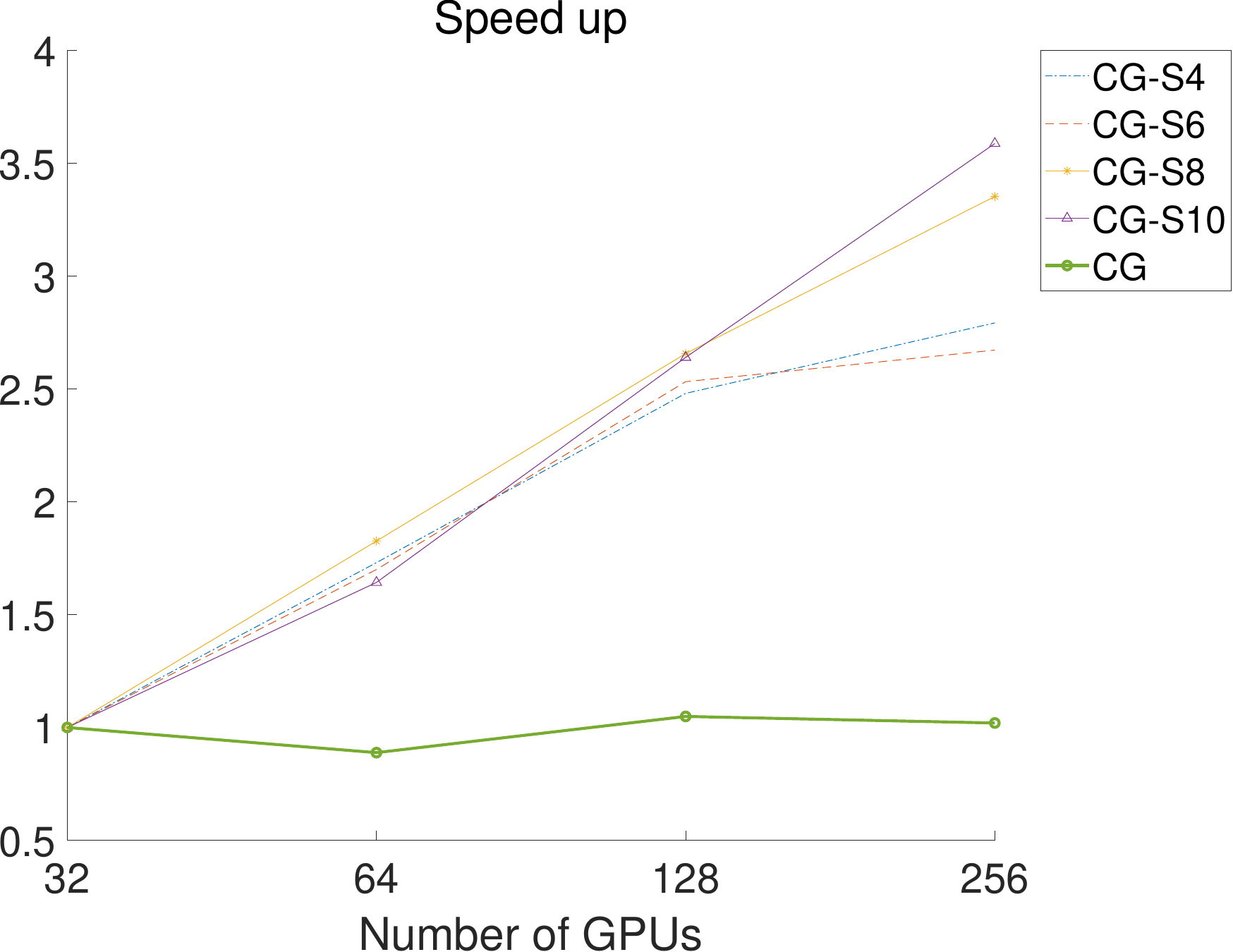}
\caption{Strong scalability: Speed up. \label{fig:strong3}}
\end{figure}

\subsection{Weak Scalability}
\label{weak}

In this section, we evaluate the weak scalability of the solver by increasing the number of DOFs proportionally to the number of GPUs, while keeping the local problem size fixed at $200^3$ DOFs per GPU. Experiments were performed on up to 512 GPUs of the Leonardo supercomputer, solving linear systems with more than $4$ billion DOFs in total.

To assess algorithmic scalability and numerical robustness, we consider the preconditioned variant of PCG-S, employing the Algebraic MultiGrid (AMG) method implemented in \texttt{BootCMatchGX} and described in~\cite{ieee2023}, configured with a W-cycle. As in the parametric study and previous experiments, the reduced Gram systems are solved using $\nu = 30$ FGS sweeps.

For all test cases, the solver is run with relative residual
tolerance $\tau=10^{-6}$ and maximum outer-iteration count
$k_{\max}=1000$. Results are compared against the corresponding classical PCG method.

\begin{figure}[!htbp]
\centering
\includegraphics[width=0.7\textwidth]{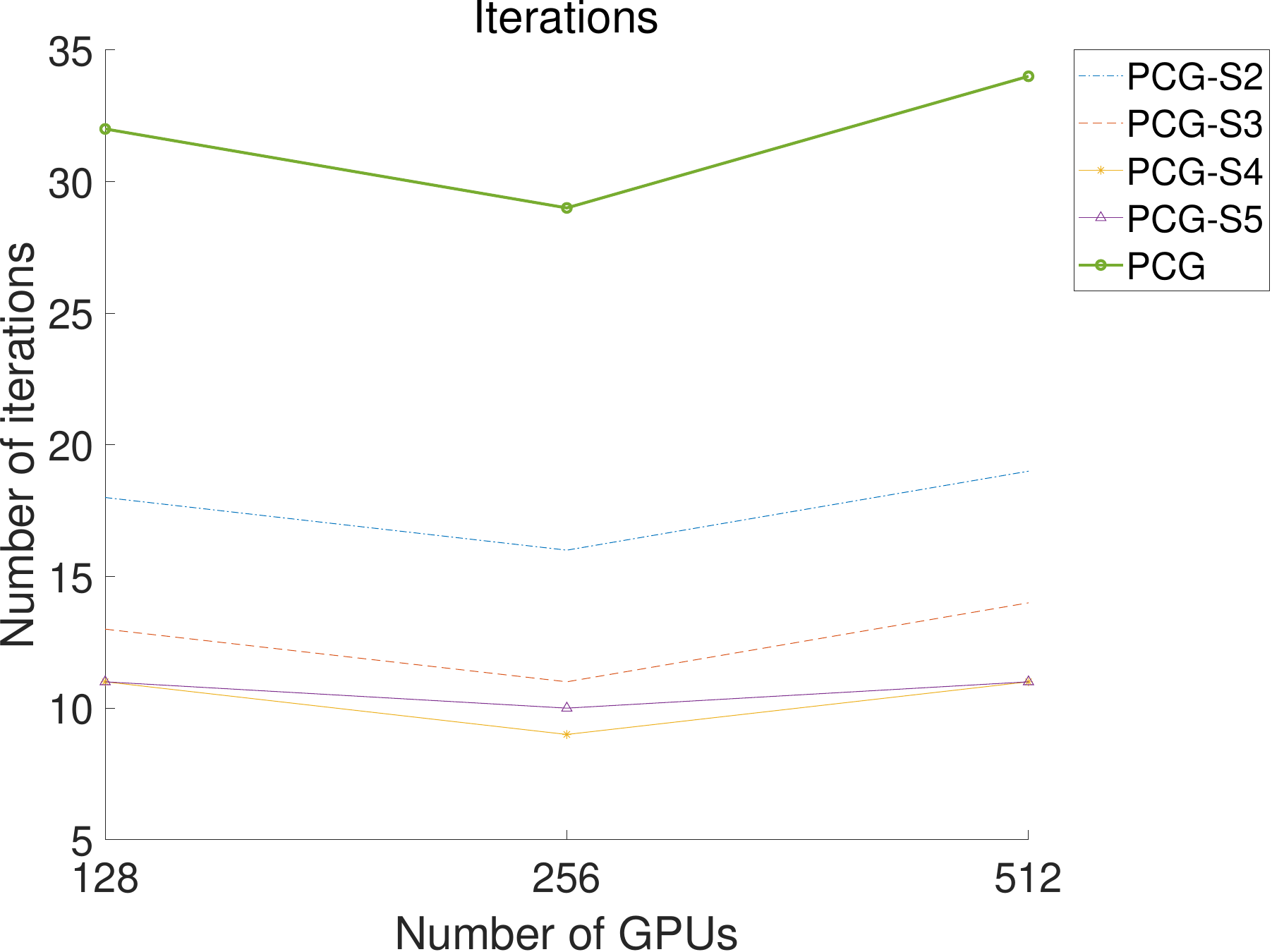}\\[0.2cm]
\includegraphics[width=0.7\textwidth]{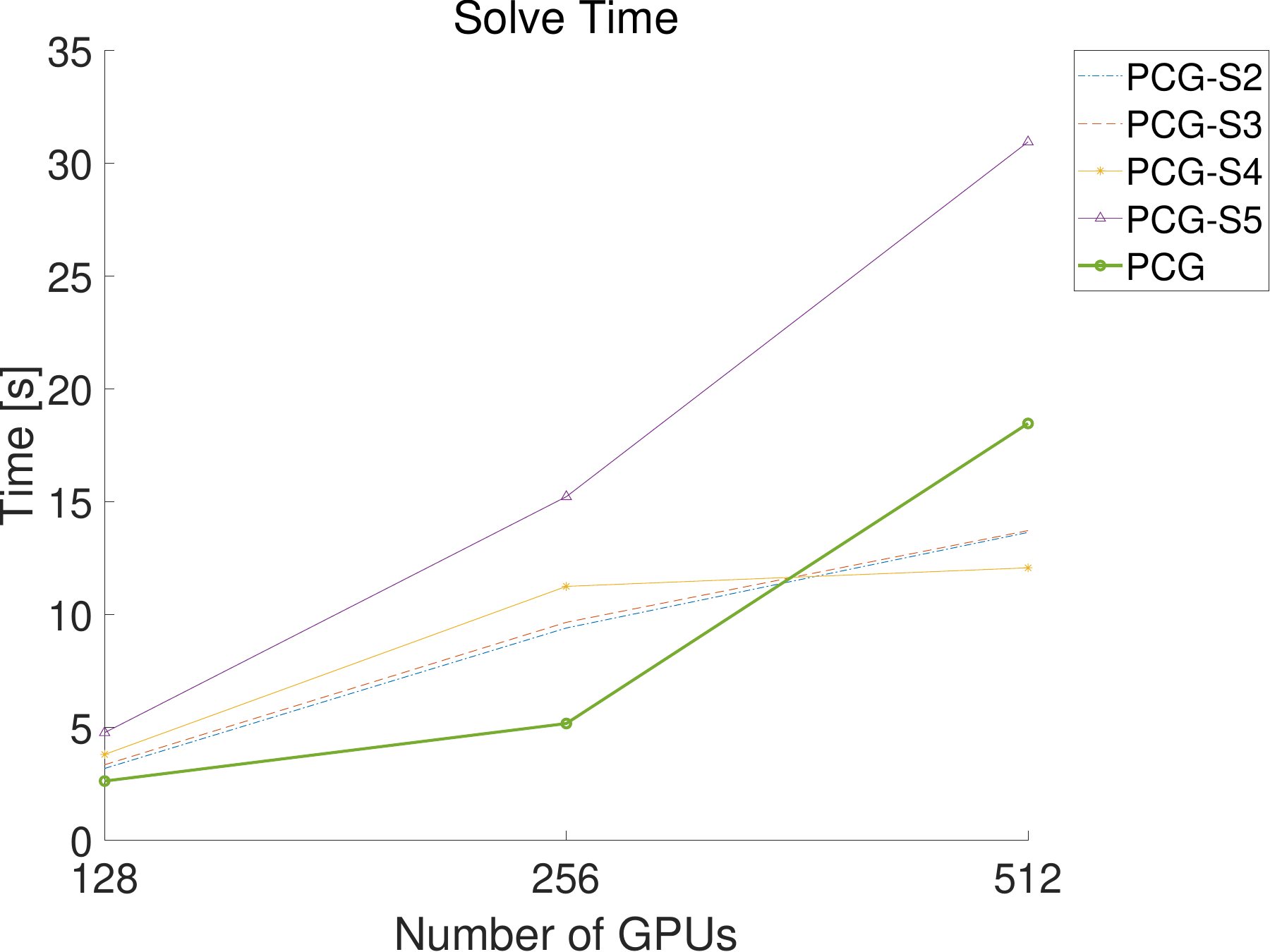}
\caption{Weak scalability: number of iterations (top) and total solve time (bottom).}
\label{fig:weak1}
\end{figure}

Figure~\ref{fig:weak1} reports the weak--scaling behavior in terms of outer iterations (top) and total solve time (bottom). Each PCG--S outer iteration represents approximately $s$ Krylov
steps, involving $s$ operator applications and $s$ preconditioner
applications. Consequently, the smaller PCG--S outer-iteration counts
and their decrease as $s$ increases reflect the expected aggregation
of multiple Krylov steps within each outer iteration. The plot is
intended to document the convergence behavior of each configuration
under weak scaling, rather than to compare equivalent Krylov work
across different values of $s$.

For each fixed value of $s$, the outer iteration count remains
relatively stable as the number of GPUs and the global problem size
increase. Since the amount of Krylov work represented by each outer
iteration remains unchanged when $s$ is fixed, this behavior provides
evidence of algorithmic weak scalability. It also indicates that the
Chebyshev-based PCG--S formulation combined with the AMG
preconditioner preserves robust convergence over the tested range of
problem sizes and GPU counts.

The main performance result concerns time-to-solution. Indeed,
at $512$ GPUs, the reduced frequency of global synchronization for moderate step sizes $s=2,3,4$ outweighs 
the additional local block work and any modest difference in operator applications, producing a measured reduction in time-to-solution.

\begin{figure}[!htbp]
\centering
\includegraphics[width=0.7\textwidth]{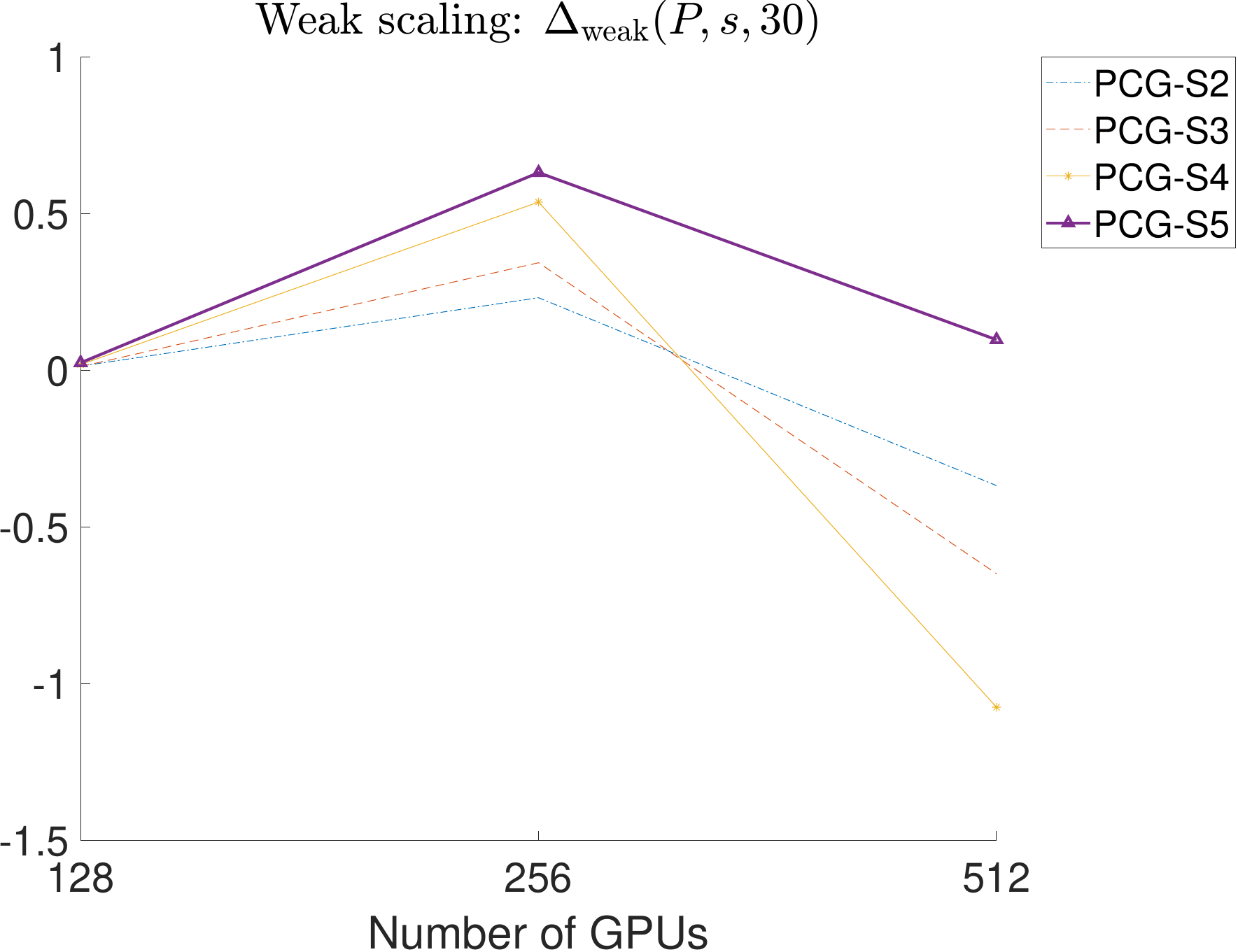}\\[0.2cm]
\includegraphics[width=0.7\textwidth]{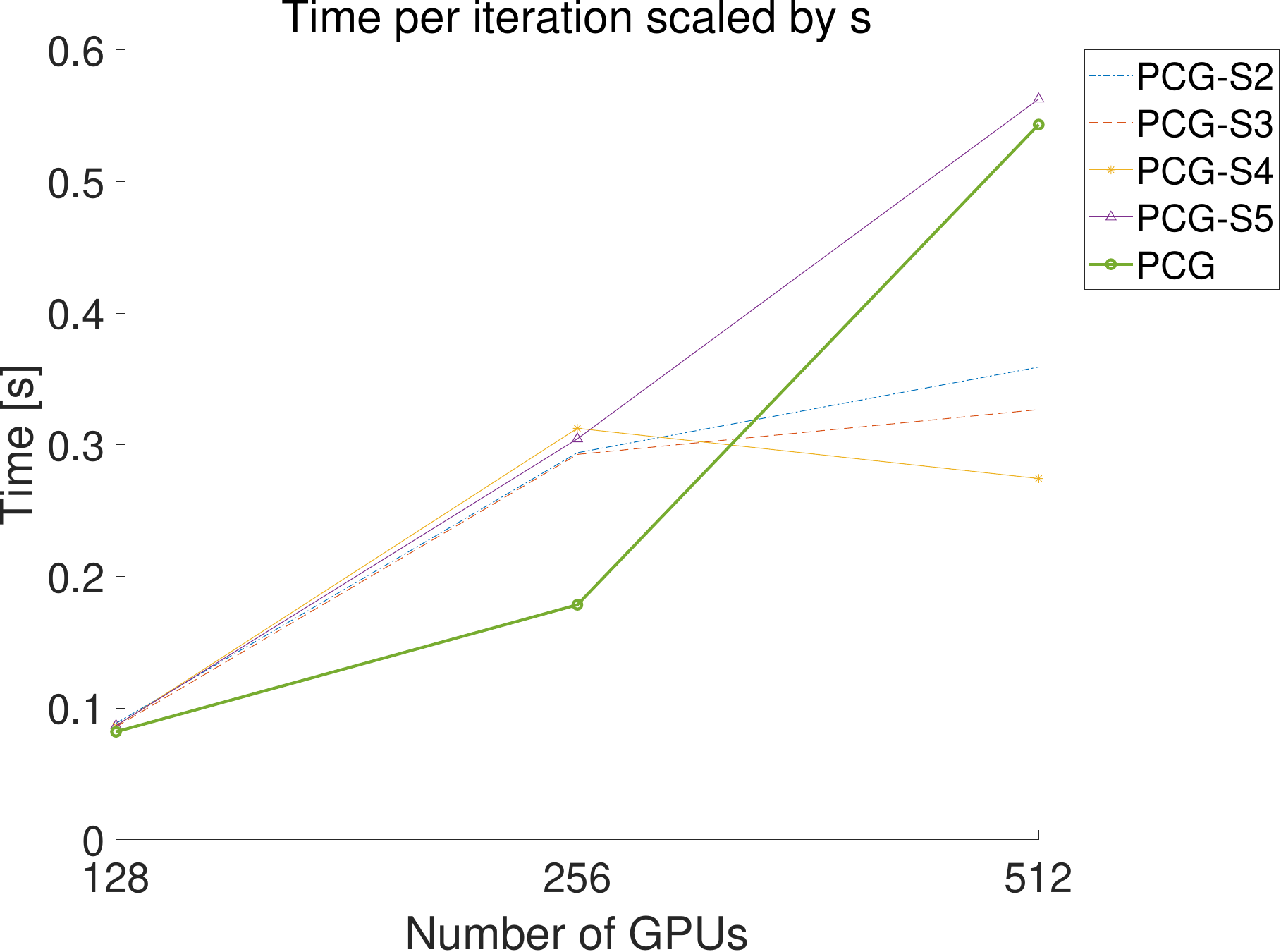}
\caption{Weak-scaling behavior of $\Delta_{\mathrm{weak}}(P,s,\nu=30)$ (top) and time per iteration normalized by the step size $s$ (bottom).}
\label{fig:weak3}
\end{figure}

Figure~\ref{fig:weak3} (top) reports the weak-scaling indicator
$\Delta_{\mathrm{weak}}(P,s,\nu=30)$. As the number of GPUs increases,
the indicator decreases and becomes negative at $512$ GPUs for
$s = 2, 3,$ and $4$. A negative value indicates that one PCG--S outer iteration is faster
than $s$ classical PCG iterations under the model assumptions.
This behavior is consistent with the performance model of
Section~\ref{sec:costperiter}, which predicts that communication savings
become dominant at sufficiently large $P$.

The discrepancy observed for $s=5$ at 512 GPUs is plausibly attributable
to increased sensitivity to system noise and tail--latency effects in
global collectives at extreme scale. A more detailed analysis at larger
process counts would be required to fully characterize this behavior.

The bottom panel of Figure~\ref{fig:weak3} reports the scaled time per outer
iteration, showing a trend consistent with the weak--scaling indicator
in the top panel. For $s = 2, 3,$ and $4$, the scaled per--iteration time
decreases at $512$ GPUs, confirming that communication reduction effectively
compensates for the additional local computation introduced by the
$s$-step formulation. Combined with the reduced number of outer iterations
observed in Figure~\ref{fig:weak1}, this explains the improved time-to-solution
achieved by these configurations.

\begin{figure}[!htbp]
\centering
\includegraphics[width=0.7\textwidth]{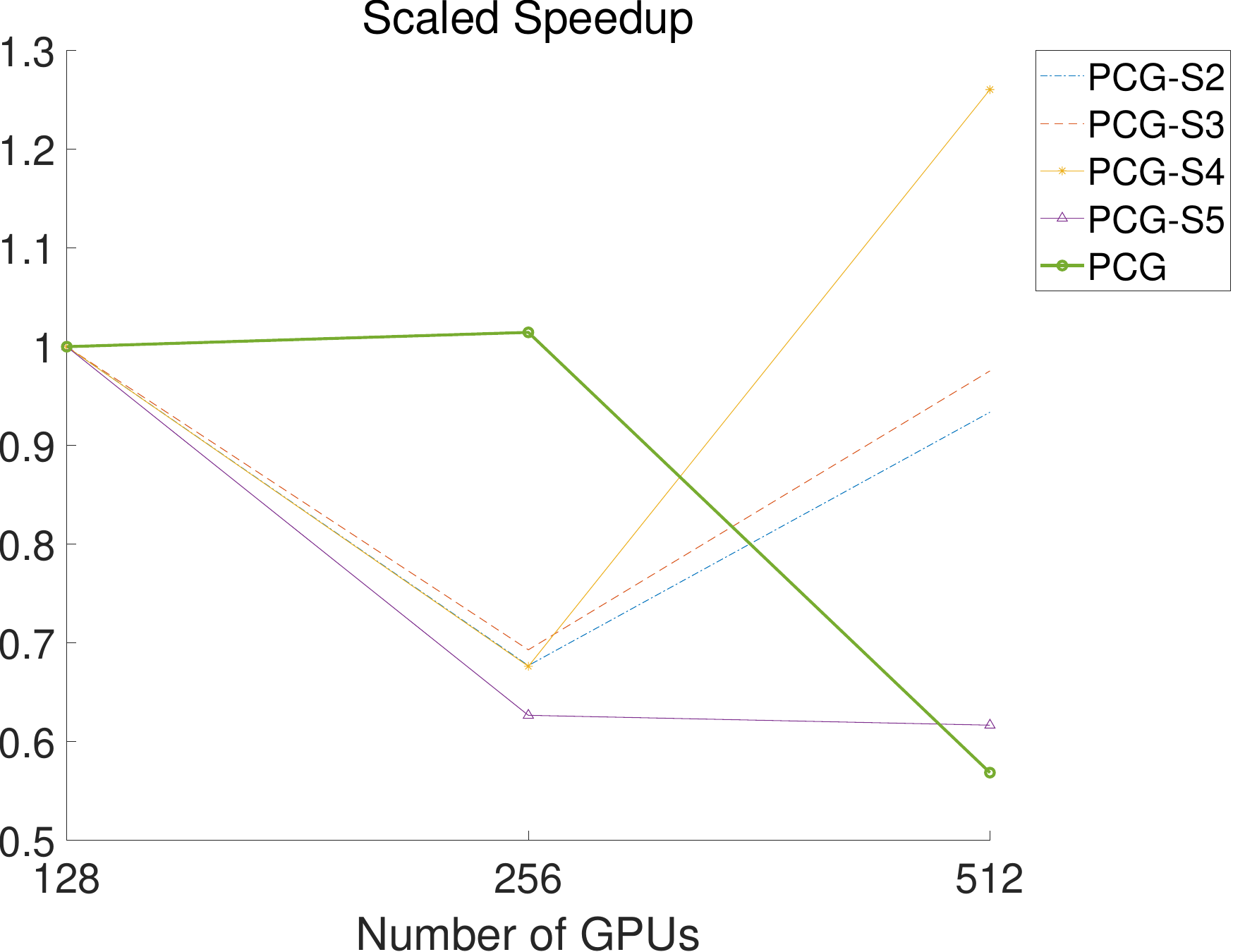}
\caption{Weak scalability: scaled speedup.}
\label{fig:weak4}
\end{figure}

Figure~\ref{fig:weak4} reports the scaled speedup under weak scaling.
At 512 GPUs, PCG exhibits a noticeable performance degradation,
indicating that synchronization costs and collective latency effects
increasingly limit scalability at extreme concurrency.

In contrast, the $s$-step variants with $s = 2, 3,$ and $4$ show an
increasing scaled speedup as the number of GPUs grows. Among these,
$s = 4$ achieves the best overall performance, providing the most
favorable balance between communication reduction and additional
local computation.

These results indicate that moderate step sizes offer the best
trade-off at 512 GPUs. Verifying whether the theoretical model, which
predicts increasing benefits for larger block sizes as $P$ grows, continues to hold beyond the tested regime will require experiments
at larger scales.

Overall, no severe conditioning-related instability was observed for the
Chebyshev-based blocks in the tested configurations, while the FGS
solution of the Gram system remains numerically stable with a very negligible
overhead in the tested regime, validating the design choice of combining these components.

Under weak scaling, the $s$-step formulation
improves time-to-solution at 512 GPUs for moderate step sizes
($s = 2, 3, 4$), where the reduction in outer iterations compensates
for the additional local computation. In this regime, $s = 4$ provides
the most favorable balance between communication reduction and
arithmetic overhead.

These findings suggest the existence of an optimal moderate step
size in the explored range. At larger GPU counts, the interplay
between communication and computation will remain the determining
factor, and careful selection of $s$ will be essential to fully
exploit the advantages of the $s$-step approach.

\section{Concluding Remarks and Future Work}
\label{sec:conc}

We presented a scalable multi--GPU implementation of the $s$--step PCG method combining a Chebyshev--stabilized Krylov basis with an iterative FGS solver for the reduced Gram systems.

The experimental results indicate that the method exhibits satisfactory numerical robustness within the tested configurations, and that the proposed performance model reflects the main observed scalability trends. In strong scaling, the $s$--step variants tend to become competitive with, and in several cases faster than, classical CG as the number of GPUs increases, suggesting that synchronization costs play an increasingly relevant role at scale. In weak scaling with AMG preconditioning, reductions in time--to--solution are observed at 512 GPUs for moderate step sizes ($s=2,3,4$), with $s=4$ offering a favorable compromise between communication reduction and additional arithmetic cost in the explored regime. Convergence has been observed on problems exceeding $4$ billion degrees of freedom.

For moderate values of $s$, the Chebyshev--based blocks appear to remain sufficiently well conditioned, and a fixed moderate number of FGS sweeps provides an adequate and inexpensive approximation of the Gram solves in the tested scenarios, without evidence of a significant performance penalty. For larger--scale settings, an adaptive Gram solver strategy based on the relative residual of the reduced system, allowing the inner accuracy to vary across outer iterations in accordance with inexact Krylov principles, could offer additional flexibility in balancing robustness and efficiency.

Further investigation at larger node counts and for larger step sizes is required to better characterize the method in more extreme regimes. Within the scales currently examined, the implementation appears to make effective use of modern GPU--accelerated architectures. Since communication is typically more energy--intensive than computation, reducing global synchronization and increasing computational locality may also have favorable implications for energy efficiency, although a dedicated study would be necessary to assess this aspect quantitatively.

Since robust preconditioning generally represents a dominant portion of the runtime, future work should also address the design of robust AMG--like strategies that preserve spectral regularization while minimizing synchronization overhead. Reducing communication within the preconditioner, increasing local computational density, and exploring adaptive precision mechanisms are key directions to further improve scalability and time--to--solution on GPU platforms.

Beyond the algorithmic and theoretical contributions, the work provides,
to the best of our knowledge, the first publicly available fully distributed
multi--GPU implementation of preconditioned $s$-step CG at the scale
reported here.

\section*{Conflict of Interest}
The authors declared that they have no conflict of interest.

\section*{Funding statement}
This work was partially supported by Spoke~6 “Multiscale Modelling \& Engineering Applications” 
of the Italian Research Center on High-Performance Computing, Big Data and Quantum Computing (ICSC), 
funded by MUR under the NextGenerationEU (NGEU) program, and by the EuroHPC project 
“Energy Oriented Center of Excellence (EoCoE III): Fostering the European Energy Transition with Exascale” 
(Project No.~101144014), funded by the European Commission.

\section*{Author Contributions}
Pasqua D'Ambra and Stephen Thomas conceptualized the work and wrote the manuscript. Pasqua D'Ambra defined the algorithmic framework and analyzed the results. Massimo Bernaschi and Mauro G. Carrozzo implemented the software, and Mauro G. Carrozzo carried out the experimental testing.

\section*{Acknowledgments}
The authors acknowledge Spoke~6 of ICSC for granting access to the Leonardo system at CINECA (Italy), and the EuroHPC JU for granting access to MareNostrum~5 at BSC (Spain) under the EuroHPC Development Access call.
These resources supported the computational experiments presented in this work.

\bibliography{CGS-step}

\end{document}